\def\@maketitle{%
  \newpage
  \null
  \vskip 2em%
  \begin{center}%
  \let \footnote \thanks
    {\Large\bfseries \@title \par}%
    \vskip 1.5em%
    {\normalsize
      \lineskip .5em%
      \begin{tabular}[t]{c}%
        \@author
      \end{tabular}\par}%
    \vskip 1em%
    {\normalsize \@date}%
  \end{center}%
  \par
  \vskip 1.5em}
		\newtheorem{theorem}{Theorem}[section]
		\newtheorem{lemma}[theorem]{Lemma}
		\newtheorem{proposition}[theorem]{Proposition}
		\newtheorem{corollary}[theorem]{Corollary}
		\newtheorem{definition}[theorem]{Definition}
		\newtheorem{remark}[theorem]{Remark}
		\newtheorem{question}[theorem]{Question}
	\newenvironment{proof}{
		\goodbreak\par
		\textit{Proof.}%
	}{%
		\nopagebreak
		\hfill{\vrule width 1ex height 1ex depth 0ex}
		\medskip
		\goodbreak
	}
	\newcommand{\sizedescriptor}[2]
	{
		\ifthenelse{\equal{#1}{0}}{}{
		\ifthenelse{\equal{#1}{1}}{\big}{
		\ifthenelse{\equal{#1}{2}}{\Big}{
		\ifthenelse{\equal{#1}{3}}{\bigg}{
		\ifthenelse{\equal{#1}{4}}{\Bigg}{
		#2}}}}}
	}
	\newcommand{\proven}[1]{\underline{#1}\vspace{0.2em}\\}
	\newcommand{\ie}[1][~]{i.e.{#1}}
	\newcommand{\df}[1]{\emph{#1}}
	\newcommand{\equ}{\sim}
	\newcommand{\dfeq}{\mathrel{\mathop:}=}
	\newcommand{\dfeqrev}{=\mathrel{\mathop:}}
	\newcommand{\sepdfeq}{\quad\dfeq\quad}
	\newcommand{\impl}{\Rightarrow}
	\newcommand{\rstr}[1]{\left.{#1}\right|}
	\newcommand{\im}{\mathrm{im}}
	\newcommand{\exactlyone}[4][auto]{\exists!\, #2 \,{\in}\, #3\,.\sizedescriptor{#1}{\left}({#4}\sizedescriptor{#1}{\right})}
	\newcommand{\xsome}[3]{\exists\, #1 \,{\in}\, #2\,.\,#3}
	\newcommand{\st}[3][auto]{\sizedescriptor{#1}{\left}\{#2\;\sizedescriptor{#1}{\middle}|\;#3\sizedescriptor{#1}{\right}\}}
	\newcommand{\NN}{\mathbb{N}}
	\newcommand{\RR}{\mathbb{R}}
	\newcommand{\Sym}[1][n]{\mathrm{Sym}_{#1}}
	\newcommand{\Minsym}[1][n]{\mathrm{MinSym}_{#1}}
	\newcommand{\el}[2][n]{\mathtt{el}_{#1,#2}}
	\newcommand{\qsymm}{{\mathcal{S}}}
	\newcommand{\symm}{{\tilde{\mathcal{S}}}}
\title{Symmetric Polynomials in Tropical Algebra Semirings}
\date{}
\author{Sara Kali\v{s}nik}
\affil{Max Planck Institute for Mathematics in the Sciences, sara.kalisnik@mis.mpg.de}
\affil{Wesleyan University, skalisnikver@wesleyan.edu}
\author{Davorin Le\v{s}nik%
  \thanks{Electronic address: \texttt{davorin.lesnik@fmf.uni-lj.si}; This author was partially supported by the Air Force Office of Scientific Research, Air Force Materiel Command, USAF under Award No.~FA9550-14-1-0096.}}
\affil{Department of Mathematics, University of Ljubljana}
\begin{document}
	
\maketitle
\vspace{-1cm}
	\begin{abstract}
		The growth of tropical geometry has generated significant interest in the tropical semiring in the past decade. However, there are other semirings in tropical algebra that provide more information, such as the symmetrized $(\max, +)$, Izhakian's extended and  Izhakian-Rowen's supertropical semirings. In this paper we identify in which of these upper-bound semirings we can express symmetric polynomials in terms of elementary ones. We show that in the case of idempotent semirings we can do this precisely when the Frobenius property is satisfied, that in the case of supertropical semirings this is always possible, and that in non-trivial symmetrized semirings this is never possible. Our results allow us to determine the tropical algebra semirings where an analogue of the Fundamental Theorem of Symmetric Polynomials holds and to what extent.    
	\end{abstract}
	
	\section{Introduction}\label{SECTION: Introduction}
	Tropical algebra is a relatively new branch of mathematics, which has gained a lot of popularity over the last two decade~\cite{tropintro, speyersturm, Mikhalkin}. The adjective `tropical' was coined by French mathematicians in honor of the Brazilian computer scientist Imre Simon~\cite{Simon1988}, one of the pioneers in min-plus algebra. It builds on the older area more commonly known as max-plus algebra, which arises in semigroup theory, optimization, and computer science~\cite{maxlinear, gaubertmaxplus}. 

The tropical semiring lies at the heart of tropical geometry. In simplest terms, tropical geometry can be thought of as algebraic geometry over the tropical semiring, a piece-wise linear version of algebraic geometry, which replaces a variety by its combinatorial shadow. Although much work has been done, there is not yet a complete translation of the methods of algebraic geometry to the tropical situation. In particular, one of the main objects of study in algebraic geometry, invariant theory, has not been studied much in the tropical setting. 

In \cite{symtrop}, we initiated the translation of invariant theory by studying the tropical semiring. In this paper we build on our previous results and answer what happens in other semirings of interest to tropical algebraists, such as the symmetrized $(\max, +)$ semiring~\cite{gaubertmaxplus}, the extended tropical semiring~\cite{extendedsemiring}, the supertropical semiring~\cite{Izhakian20102222, extendedsemiring, Izhakian2011}. They are all \emph{upper-bound} semirings~\cite{Izhakian20112431, Izhakian201661}; therefore, we formulate the statements in this setting. 

One of the reasons all these different extensions of the tropical semiring were introduced was to help develop algorithms that depend only `on valuations.' For example, the symmetrization arises when considering the field of Puiseux series with real coefficients, equipped with the map which takes the valuation and the sign of a series. A signed element encodes the inverse image of a single element by this map, whereas a balanced element encodes all the series in a parameter $t$ with an asymptotic expansion $O(t^a)$. Similarly, the extended tropical semiring arises when considering the field of Puiseux series with complex coefficients, with an analogous interpretation. The symmetrized version has been used to design combinatorial algorithms over real nonarchimedean fields, working in generic cases, using only the sign and valuation~\cite{allamigeon2015tropicalizing}, and more recently in the work of X. Allamigeon et.~al.~\cite{allamigeon2017solving}.\footnote{There is a closely related alternative approach to the extensions of semirings, based on the notion of hyperfield coming back to Krasner. Both the extended tropical semiring and the symmetrized  $(\max, +)$  semiring can be identified with hyperfields, and conversely, any hyperfield can be identified with some powerset semiring. The interest in hyperfields arose recently after several papers on the topic appeared~\cite{connes2011hyperring, viro2010hyperfields, baker2016matroids}. }

	We study elementarity, \ie the ability to express symmetric polynomials with elementary ones in tropical semirings. Given $n \in \NN$, we say that a semiring $X$ is \df{$n$-elementary} when every symmetric polynomial $p$ in $n$ variables can be written as a polynomial in the elementary symmetric polynomials. The semiring $X$ is \df{fully elementary} when it is $n$-elementary for all $n \in \NN$.
	
 We prove that in upper-bound semirings $2$-elementarity is equivalent to the Frobenius property (Theorem~\ref{THEOREM: Frobenius and 2-elementarity}). In idempotent semirings the Frobenius property is equivalent to full elementarity (Theorem~\ref{THEOREM: Elementarity in idempotent semirings} and Corollary~\ref{COROLLARY: Elementarity in idempotent semirings}). In addition, supertropical semirings, including the extended tropical semiring, which are all Frobenius, are fully elementary (Theorem~\ref{THEOREM: Supertropical semirings are fully elementary}). As a corollary of these theorems we get that the tropical semiring $\RR_{\min}$ and the max-plus semiring $\RR_{\max}$ are fully elementary. Their symmetrizations $\symm(\RR_{\min})$ and $\symm(\RR_{\max})$, however, are not.	
		
	\section{Preliminaries}\label{SECTION: Preliminaries}
	
		Recall that $(X,+,0,\cdot)$ is a \df{semiring} when $(X,+,0)$ is a commutative monoid, $(X,\cdot)$ a semigroup, the multiplication $\cdot$ distributes over the addition $+$ and $0$ is an absorbing element, \ie $0 \cdot x = x \cdot 0 = 0$ for all $x \in X$.
				
		 A semiring is \df{unital} when it has the multiplicative unit $1$, \ie such an element $1\in X$ that $1\cdot x = x\cdot 1 =x$ for all $x\in X$.  A semiring is \df{commutative} when $\cdot$ is commutative. It is \df{idempotent} when $+$ is idempotent, \ie $x + x = x$ for all $x \in X$. A map between semirings is a semiring homomorphism when it preserves addition, zero and multiplication. If the semirings are unital, it is called a unital semiring homomorphism when it additionally preserves 1. For an excellent introduction to the theory of semirings, we refer the reader to \cite{semirings}.
		
		As usual, we often omit the $\cdot$ sign in algebraic expressions, and shorten the product of $n$ many factors $x$ to $x^n$. Also, we write just $X$ instead of $(X,+,0,\cdot)$ (or $(X,+,0,\cdot,1)$) when the operations are clear.
		
		Given a unital semiring $X$, we can view any natural number\footnote{In this paper we consider 0 to be a natural number. It represents a unit for addition in $X$.} $n \in \NN$ as an element of $X$ in the usual way:
		\[n = \underbrace{1 + 1 + \ldots + 1}_{\text{$n$-times}}.\] 
		However, this mapping (in fact, a unital semiring homomorphism) from $\NN$ to $X$ need not be injective; for example, if $X$ is idempotent, then $1 = 2$. In fact, that is a characterization of idempotency in unital semirings: we get the converse by multiplying the equality $1 = 2$ with an arbitrary $x \in X$.
		
		In any commutative monoid $(X,+,0)$ we can define a binary relation, \ie \emph{intrinsic ordering}, by
		\[
		a \leq b \textrm{ if and only if }\xsome{x}{X}{a + x = b}
		\]
		for $a, b \in X$.  
		
		The intrinsic order is reflexive (since $a + 0 = a$) and transitive (since if $a + x = b$ and $b + y = c$, then $c = b + y = a + x + y$). Thus, it is a preorder on $X$. Note that $0$ is a least element in this preorder (since $0 + a = a$), and $+$ is monotone, in the sense that if $a \leq b$ and $c \leq d$, then $a + c \leq b + d$. Of course, any semiring is a commutative monoid for $+$ and thus has the intrinsic order. Note that in this case multiplication is monotone as well: distributive laws give us that $a \leq b$ implies $a c \leq b c$ and $c a \leq c b$, and then it follows from $a \leq b$ and $c \leq d$ that $a c \leq b c \leq b d$.
		
		The point of the intrinsic order in this paper is the directedness it implies: for any $a, b \in X$ we can find an upper bound for them, namely $a + b$.  More generally, in the proofs we repeatedly use the fact that a part of a sum (with many summands) is in relation $\leq$ with the entirety of the sum.
		
		Recall that any preorder defines an equivalence relation by $a \approx b \dfeq a \leq b \land b \leq a$. A preorder is antisymmetric (hence, a partial order) when $\approx$ is equality. In general, a preorder on a set $X$ induces a partial order on the quotient set $X/_\approx$.
		
		The intrinsic order on a commutative monoid (or a semiring) is not necessarily antisymmetric; for example, in a group (or a ring) all elements are equivalent. Yet, antisymmetry is crucial in our arguments, hence the following definition, which already appeared in~\cite{Izhakian20112431, Izhakian201661}.
		
		\begin{definition}
			A semiring is \df{upper-bound} when its intrinsic order is antisymmetric (thus a partial order).
		\end{definition}
		
		A simple example of an upper-bound semiring is the set of natural numbers $\NN$, where the intrinsic order is the usual $\leq$. However, we will be particularly interested in idempotent semirings.
		
		\begin{proposition}\label{PROPOSITION: Order of idempotent semirings}
			Let $X$ be a semiring. Define a binary relation $\ll$ on $X$ by 
			\[
			a \ll b \textrm{ if and only if } a + b = b.
			\]
			Then $\ll$ is antisymmetric and transitive, and the following statements are equivalent.
			\begin{enumerate}
				\item
					$X$ is an idempotent semiring.
				\item
					The relation $\ll$ is reflexive (thus a partial order).
				\item
					The relation $\ll$ is the same as the intrinsic order $\leq$.
				\item
					For any $a, b \in X$ their sum $a + b$ is the unique join (the least upper bound) of $a$ and $b$ in the intrinsic order.
			\end{enumerate}
			In particular, any idempotent semiring is an upper-bound semiring.
		\end{proposition}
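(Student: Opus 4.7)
The plan is first to dispose of the unconditional claims about $\ll$, then cycle through the four equivalences, and finally read off the upper-bound property as a corollary.

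For antisymmetry of $\ll$, I would simply note that $a \ll b$ and $b \ll a$ unfold to $a + b = b$ and $b + a = a$, and commutativity of $+$ forces $a = b$. For transitivity, from $a + b = b$ and $b + c = c$ I would compute $a + c = a + (b + c) = (a + b) + c = b + c = c$. Neither argument uses anything beyond the monoid axioms, so both hold in every semiring.

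Next I would work through the TFAE cycle. The equivalence of (1) and (2) is immediate: $a \ll a$ is literally $a + a = a$, so reflexivity of $\ll$ at every point is exactly idempotency. For (1)$\Rightarrow$(3), one inclusion is automatic ($a \ll b$ gives $a + b = b$, so $b$ witnesses $a \leq b$); for the converse, if $a + x = b$ then idempotency lets me compute $a + b = a + (a + x) = (a + a) + x = a + x = b$, so $a \ll b$. The direction (3)$\Rightarrow$(2) is trivial since $\leq$ is always reflexive. For (1)(2)(3)$\Rightarrow$(4), I would first observe that $a, b \leq a + b$ by the very definition of $\leq$, and then show that any upper bound $c$ of $a$ and $b$ satisfies $a + b \leq c$: using (3) I have $a + c = c$ and $b + c = c$, so $(a + b) + c = a + (b + c) = a + c = c$, which is $a + b \ll c$, hence $a + b \leq c$. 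Uniqueness of the least upper bound follows from antisymmetry of $\ll = \leq$, which we already have. Finally, (4)$\Rightarrow$(1): since $a \leq a$ and so $a$ is an upper bound of the pair $(a, a)$, while $a + a$ is by hypothesis the \emph{unique} least upper bound, and $a$ is clearly also a least upper bound (every upper bound of $(a, a)$ dominates $a$), we must have $a + a = a$.

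The final claim is then a corollary: given (1), the equivalence with (3) identifies $\leq$ with $\ll$, which we proved antisymmetric at the outset, so $\leq$ is a partial order on $X$. I expect no genuine obstacle here; the only mildly delicate point is making sure that in the step (1)$\Rightarrow$(3) I use idempotency at the right moment, and that in (4)$\Rightarrow$(1) I extract idempotency from uniqueness of joins rather than from an unstated antisymmetry assumption.
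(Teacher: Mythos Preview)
Your argument is correct and complete; each implication is handled cleanly, and the one place that requires a little care---extracting idempotency from (4) via uniqueness of the join of $(a,a)$---is done properly without smuggling in antisymmetry of $\leq$.

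As for comparison with the paper: there is nothing to compare against. The paper does not prove this proposition in-line; its entire proof reads ``See Example~5.3~\cite{Izhakian201661}.'' Your direct cycle through the equivalences is therefore strictly more than what the paper itself provides, and is the natural self-contained argument one would expect for this kind of order-theoretic bookkeeping.
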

		\begin{proof}
		See Example 5.3~\cite{Izhakian201661}.
		\end{proof}
			
					
		
		We now turn our attention to polynomials over semirings.
		
		\begin{definition}\label{DEFINITION: Polynomials in semirings}
			Let $X$ be a unital commutative semiring.
			\begin{itemize}
				\item
					Let $m, n, d_{1,1}, \ldots, d_{m,n}$ be natural numbers and $a_1, \ldots, a_m \in X$. A \df{polynomial} is a syntactic object of the form $\sum_{k = 1}^m a_k \prod_{j = 1}^n x_j^{d_{k,j}}$ (an individual summand $a_k \prod_{j = 1}^n x_j^{d_{k,j}}$ is called a \df{monomial}, and if its coefficient $a_k$ is equal to $1$, a \df{pure monomial}). The algebra of all polynomials in variables $x_1, \ldots, x_n$ over $X$ is denoted by $X[x_1, \ldots, x_n]$.
				\item
					Each polynomial has its corresponding \df{polynomial function}. A polynomial function is a function in the image of the algebra homomorphism
					\[X[x_1, \ldots, x_n] \to \{\text{functions } X^n \to X\}\]
					which takes a syntactic polynomial expression and assigns to it the function it represents.
				\item
					A polynomial is \df{symmetric} when for each monomial $a_k \prod_{j = 1}^n x_j^{d_{k,j}}$ in it and each permutation $\sigma \in S_n$ the monomial $a_k \prod_{j = 1}^n x_{\sigma(j)}^{d_{k,j}}$ also appears in it, up to a change of the order of factors.\footnote{That is, we consider symmetry relative to commutativity of multiplication. For example, the polynomial expression $x y$ is symmetric: the transposition of variables gives $y x$, which we identify with $x y$. Of course, it would not make sense to consider $x y$ symmetric over a non-commutative semiring.} A polynomial function is \df{symmetric} when it can be represented by a symmetric polynomial.
				\item
					For any $n \in \NN$ and $j \in \{1, \ldots, n\}$ the \df{elementary symmetric polynomial} $\el{j} \in X[x_1, \ldots, x_n]$ is the sum of all products of $j$ different variables, \ie
					\begin{align*}
						\el{1}(x_1, \ldots, x_n) &= x_1 + x_2 + \ldots + x_n,\\
						\el{2}(x_1, \ldots, x_n) &= x_1 x_2 + x_1 x_3 + x_2 x_3 + \ldots + x_{n-1} x_n,\\
						\vdots\\
						\el{n}(x_1, \ldots, x_n) &= x_1 x_2 \ldots x_n.
					\end{align*}
					Note that $\el{j}$ has $\binom{n}{j}$ terms. We will use the same notation also for the corresponding polynomial functions $\el{j}\colon X^n \to X$.
			\end{itemize}
		\end{definition}
		
		\begin{remark}\label{REMARK: variants of symmetry}
			There are two reasonable definitions of when a polynomial function is symmetric: if it is represented by a symmetric polynomial (let us say that a function is `syntactically symmetric' in this case), or if its values are invariant under arbitrary permutations of variables, that is, $p(x_1, \ldots, x_n) = p(x_{\sigma(1)}, \ldots, x_{\sigma(1)})$ for all $(x_1, \ldots, x_n)$ and permutations $\sigma \in S_n$ (say that $p$ is `semantically symmetric'). Every syntactically symmetric polynomial function is clearly semantically symmetric. In Remark~\ref{REMARK: symmetry over idempotent semirings} we note that for polynomial functions over idempotent unital commutative semirings the converse also holds. We do not know whether the converse is true over an arbitrary unital commutative semiring, and we pose this as a part of Question~\ref{QUESTION: match between syntactic and semantic symmetry}. For the purposes of our theorems, a `symmetric polynomial function' refers to syntactic symmetry (as stated in Definition~\ref{DEFINITION: Polynomials in semirings}), since in proofs we use polynomials as syntactic objects. 
		\end{remark}
		
		The goal of this paper is to determine when it is possible to express symmetric polynomial functions in terms of elementary symmetric polynomials in certain upper-bound semirings. For the sake of neatly expressing this, we introduce the following definition.
		
		\begin{definition}
			Let $X$ be a unital commutative semiring.
			\begin{itemize}
				\item
					Given $n \in \NN$, $X$ is \df{$n$-elementary} when for every symmetric polynomial function $p$ in $n$ variables there exists a polynomial function $r$ in $n$ variables, such that \[
					p(x_1, \ldots, x_n) = r\big(\el{1}(x_1, \ldots, x_n), \ldots, \el{n}(x_1, \ldots, x_n)\big)\] for all $x_1, \ldots, x_n \in X$.
				\item
					$X$ is \df{fully elementary} when it is $n$-elementary for all $n \in \NN$.
			\end{itemize}
		\end{definition}
		
		Any unital commutative semiring is $0$-elementary and $1$-elementary.
		
		In the following four sections we discuss elementarity in different semirings.

	\section{Elementarity and Frobenius Equalities}\label{SECTION: Elementarity and Frobenius Equalities}
	
		As we shall see, elementarity in semirings is closely related to Frobenius equalities. Recall that the \df{Frobenius equality}\footnote{Also called `Freshman's Dream' for obvious reasons.} for $n \in \NN$ in a unital commutative semiring $X$ states that $(x + y)^n = x^n + y^n$ for all $x, y \in X$.
		
		Note that the Frobenius equality for $0$ is a bit special: it states that $1 = 1 + 1$, so it is equivalent to the semiring being idempotent. For other Frobenius equalities we have the following definition.
		
		\begin{definition}
			A unital commutative semiring $X$ is \df{Frobenius} when it satisfies Frobenius equalities for all $n \in \NN_{\geq 1}$.
		\end{definition}
		
		Here are some sources of Frobenius semirings.
		
		\begin{proposition}\label{PROPOSITION: Linear idempotent semiring is Frobenius}
			Let $X$ be an idempotent unital commutative semiring, in which the intrinsic order is linear, \ie $x \leq y$ or $y \leq x$ for all $x, y \in X$. Then $X$ is Frobenius.
		\end{proposition}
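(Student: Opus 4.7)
The plan is to exploit Proposition~\ref{PROPOSITION: Order of idempotent semirings}, which tells us that in an idempotent semiring the intrinsic order $\leq$ coincides with the ``sum is join'' order $\ll$. Under the additional hypothesis of linearity, the sum $x+y$ is simply the larger of the two elements.

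First I would reduce the problem by linearity: given $x, y \in X$, without loss of generality $x \leq y$, so by Proposition~\ref{PROPOSITION: Order of idempotent semirings}(2)--(3) we have $x + y = y$, and consequently $(x+y)^n = y^n$. It then remains to verify that $x^n + y^n = y^n$, which, again by Proposition~\ref{PROPOSITION: Order of idempotent semirings}, amounts to showing $x^n \leq y^n$.

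Next I would establish $x^n \leq y^n$ by a short induction on $n \geq 1$, using monotonicity of multiplication on the intrinsic order (which was observed in the Preliminaries via the distributive laws): the base case $n = 1$ is the assumption, and if $x^n \leq y^n$, then from $x \leq y$ we get $x^{n+1} = x \cdot x^n \leq y \cdot x^n \leq y \cdot y^n = y^{n+1}$ by two applications of monotonicity. Combining the two steps yields $(x+y)^n = y^n = x^n + y^n$, as required.

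I do not expect a serious obstacle here: the content of the proposition is essentially a repackaging of the fact that in a linearly ordered idempotent semiring $+$ is the binary maximum and $x \mapsto x^n$ is monotone, so the case analysis trivialises the binomial expansion. The only point demanding a bit of care is making sure the monotonicity argument is stated once cleanly and not re-derived, and that we correctly appeal to Proposition~\ref{PROPOSITION: Order of idempotent semirings} rather than reasoning about ``maximum'' informally.
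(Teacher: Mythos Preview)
Your proposal is correct and follows essentially the same route as the paper: assume without loss of generality $x \leq y$, use monotonicity of multiplication to get $x^n \leq y^n$, and then invoke Proposition~\ref{PROPOSITION: Order of idempotent semirings} to conclude $(x+y)^n = y^n = x^n + y^n$. The only difference is cosmetic---the paper simply cites monotonicity of multiplication for $x^n \leq y^n$ rather than writing out the induction---so your argument matches the paper's.
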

		
		\begin{proof}
			Let $n \in \NN_{\geq 1}$ and $x, y \in X$. If $x \leq y$, then by monotonicity of multiplication  $x^n \leq y^n$. By Proposition~\ref{PROPOSITION: Order of idempotent semirings}
			\[(x + y)^n = y^n = x^n + y^n.\]
			Similarly for $y \leq x$.
		\end{proof}
		
		Not every Frobenius idempotent unital commutative semiring is linearly ordered. For example, a product of any two Frobenius idempotent unital commutative semirings is again a Frobenius idempotent unital commutative semiring, but is not linearly ordered if both factors are non-trivial.
		
		Any idempotent unital commutative semiring $X$ which is \df{multiplicatively cancellable}, in the sense that $a \cdot x = b \cdot x \implies a = b$ for all $x \in X \setminus \{0\}$, is also Frobenius --- see~\cite[proof of Lemma~4.3]{trove.nla.gov.au/work/38992473}.
		
		We already mentioned that the Frobenius equality for $0$ amounts to the idempotency of the semiring, which can be expressed as $1 = 2$ (and consequently $m = n$ in $X$ for all $m, n \in \NN_{\geq 1}$, since we can keep adding $1$ to both sides of the equation). Other Frobenius equalities also give us some equalities between natural numbers in a semiring.
		
		\begin{lemma}\label{LEMMA: 2 in Frobenius semirings}
			Let $X$ be a Frobenius semiring.
			\begin{enumerate}
				\item
					Then $2 = 4$ in $X$. Consequently, any $m, n \in \NN_{\geq 2}$ are the same in $X$ if they have equal parity.
				\item
					If $X$ is upper-bound, then $2 = 3$ in $X$. Consequently, any $m, n \in \NN_{\geq 2}$ are the same in $X$.
			\end{enumerate}
		\end{lemma}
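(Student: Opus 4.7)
The plan is to specialise the Frobenius equalities at $x = y = 1$ (and iterate $+1$) to turn them into equalities between natural numbers inside $X$, then use antisymmetry of the intrinsic order for part (2).

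For part (1), I would instantiate the Frobenius equality for $n = 2$ at $x = y = 1$ and compute $(1+1)^2 = 4$ and $1^2 + 1^2 = 2$, yielding $4 = 2$ in $X$. Adding $1$ to both sides repeatedly shows $2 = 4 = 6 = \ldots$ and $3 = 5 = 7 = \ldots$; in other words, $m = n$ whenever $m, n \in \NN_{\geq 2}$ have the same parity. (One might try higher Frobenius equalities such as $n = 3$ with $x = y = 1$, but they only give $8 = 2$, which is already a consequence of $4 = 2$, so nothing is gained without antisymmetry.)

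For part (2), I would combine the identity $4 = 2$ from part (1) with the definition of the intrinsic order. On the one hand, $2 + 1 = 3$ witnesses $2 \leq 3$. On the other hand, rewriting $4 = 2$ as $3 + 1 = 2$ witnesses $3 \leq 2$. Since $X$ is upper-bound, the intrinsic order is antisymmetric, so $2 = 3$ in $X$. Adding $1$ to both sides repeatedly then yields $2 = 3 = 4 = \ldots$, so all $m, n \in \NN_{\geq 2}$ agree in $X$.

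There is no real obstacle here beyond keeping in mind that cancellation is unavailable in a semiring; the whole point of the argument is to avoid cancellation by evaluating the Frobenius equality at the unit $1$ (which turns a polynomial identity into an arithmetic one) and, in the upper-bound case, by upgrading the two asymmetric witnesses $2 + 1 = 3$ and $3 + 1 = 2$ to equality via antisymmetry. The fact that only the $n = 2$ Frobenius equality is needed—together with upper-boundedness—is the content of the lemma.
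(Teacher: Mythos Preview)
Your proof is correct and essentially identical to the paper's: both instantiate the $n=2$ Frobenius equality at $x=y=1$ to get $2=4$, then in the upper-bound case use the chain $2 \leq 3 \leq 4 = 2$ (equivalently, your witnesses $2+1=3$ and $3+1=2$) together with antisymmetry to conclude $2=3$. The only cosmetic difference is that you spell out the odd-parity consequence $3=5=7=\ldots$ in part~(1) explicitly, whereas the paper leaves it implicit.
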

		
		\begin{proof}
			\begin{enumerate}
				\item
					We have $2 = 1^2 + 1^2 = (1 + 1)^2 = 4$. It follows inductively that $2 = 2k$ in $X$ for all $k \in \NN_{\geq 1}$.
				\item
					By definition of $\leq$ we have $2 \leq 2 + 1 = 3 \leq 3 + 1 = 4 = 2$. By antisymmetry of $\leq$ we get $2 = 3$. It follows inductively that $2 = k$ in $X$ for all $k \in \NN_{\geq 2}$.
			\end{enumerate}
		\end{proof}
		
		\begin{remark}
			If a Frobenius semiring is not upper-bound, we might not have $2 = 3$. For example, take the semiring $\NN$ and define $a, b \in \NN$ to be equivalent when they are equal or they are both $\geq 2$ and of equal parity. The quotient $\NN/_\equ = \{[0], [1], [2], [3]\}$ inherits the semiring structure, for which it is Frobenius, and we have $[2] \neq [3]$. Of course, $\NN/_\equ$ is then not upper-bound as $[0] \leq [1] \leq [2] \leq [3] \leq [2]$.
		\end{remark}
		
		The Frobenius property allows the following partial characterization of elementarity.
		
		\begin{theorem}\label{THEOREM: Frobenius and 2-elementarity}
			Let $X$ be a unital commutative semiring.
			\begin{enumerate}
				\item
					If $X$ is Frobenius, it is $2$-elementary.
				\item \label{THEOREM: Frobenius and 2-elementarity: 2}
					If $X$ is upper-bound, the converse also holds.
			\end{enumerate}
		\end{theorem}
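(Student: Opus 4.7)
For part (1), given a symmetric polynomial expression in two variables, the symmetry condition pairs each monomial $a \cdot x^p y^q$ with $a \cdot x^q y^p$ (same coefficient, via the transposition in $S_2$). Grouping terms this way, it suffices to express each pair-sum $x^p y^q + x^q y^p$ (with $p \leq q$) as a polynomial in the elementary symmetric polynomials $x + y$ and $xy$. The diagonal case $p = q$ collapses to $x^p y^p = (xy)^p$. For $p < q$, factor out $(xy)^p$ to get
\[
x^p y^q + x^q y^p \;=\; (xy)^p \cdot \bigl(x^{q-p} + y^{q-p}\bigr),
\]
and since $q - p \geq 1$, the Frobenius equality rewrites $x^{q-p} + y^{q-p}$ as $(x+y)^{q-p}$, yielding the polynomial $(xy)^p (x+y)^{q-p}$ in $x+y$ and $xy$.

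For part (2), fix $n \in \NN_{\geq 1}$. The polynomial $x^n + y^n$ is symmetric, so $2$-elementarity provides a polynomial $r$ in two variables with
\[
x^n + y^n \;=\; r(x+y,\, xy) \qquad \text{for all } x, y \in X.
\]
Specializing at $y = 0$ gives $r(x, 0) = x^n$. The key general fact is that any polynomial function built from $+$, $\cdot$, and constants of $X$ is monotone in each argument with respect to the intrinsic order, since both $+$ and $\cdot$ are monotone (as recorded in the preliminaries) and monotonicity is preserved under sums and products. As $xy \geq 0$, monotonicity of $r$ in its second argument yields
\[
(x+y)^n \;=\; r(x+y,\, 0) \;\leq\; r(x+y,\, xy) \;=\; x^n + y^n.
\]
The reverse inequality $x^n + y^n \leq (x+y)^n$ is immediate from the binomial expansion of $(x+y)^n$, which contains $x^n$ and $y^n$ as two of its summands. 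Antisymmetry of the intrinsic order, which is precisely the upper-bound hypothesis, then forces equality.

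The substantive direction is part (2). Although the polynomial $r$ produced by $2$-elementarity could a priori be any two-variable polynomial, the argument exploits only its value at $y=0$ together with the general monotonicity of polynomial evaluation. Combined with the elementary inequality $x^n + y^n \leq (x+y)^n$ and antisymmetry of $\leq$, these observations force the Frobenius equality. The upper-bound hypothesis is used essentially: without antisymmetry, having both $(x+y)^n \leq x^n + y^n$ and $x^n + y^n \leq (x+y)^n$ would not yield equality.
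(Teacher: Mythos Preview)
Your proof is correct and follows essentially the same route as the paper. Part~(1) is identical, and in part~(2) both arguments specialize $y=0$ to identify $r(\,\cdot\,,0)$ with the $n$-th power and then use that $0 \leq xy$ only increases $r$; you phrase this via monotonicity of polynomial evaluation, while the paper writes out the explicit sum and notes that restricting to the $i_k=0$ summands is a partial sum bounded by the full one, but the content is the same.
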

		
		\begin{proof}
			\begin{enumerate}
				\item
					Let $p(x,y) = \sum_{k = 1}^m a_k x^{i_k} y^{j_k}$ be a symmetric polynomial. Hence, for any monomial $a_k x^{i_k} y^{j_k}$ in $p$, if $i_k \neq j_k$, the polynomial also possesses the monomial of the form $a_k x^{j_k} y^{i_k}$. Thus we can write
					\[p(x,y) = \sum_{k \in \{1, \ldots, m\}, i_k = j_k} a_k (x y)^{i_k} + \sum_{k \in \{1, \ldots, m\}, i_k > j_k} a_k (x^{i_k} y^{j_k} + x^{j_k} y^{i_k}) =\]
					\[= \sum_{k \in \{1, \ldots, m\}, i_k = j_k} a_k (x y)^{i_k} + \sum_{k \in \{1, \ldots, m\}, i_k > j_k} a_k (x y)^{j_k} (x^{i_k - j_k} + y^{i_k - j_k}).\]
					Since $i_k - j_k \geq 1$ in the last sum, Frobenius equality implies 
					\[
					x^{i_k - j_k} + y^{i_k - j_k} = (x + y)^{i_k - j_k}.
					\]
					Therefore
					\[p(x,y) = \sum_{k \in \{1, \ldots, m\}, i_k = j_k} a_k \el[2]{2}^{i_k}(x,y) + \sum_{k \in \{1, \ldots, m\}, i_k > j_k} a_k \el[2]{2}^{j_k}(x,y) \el[2]{1}^{i_k - j_k}(x,y).\]
					We can simplify this to
					\[p(x,y) = \sum_{k \in \{1, \ldots, m\}, i_k \geq j_k} a_k \el[2]{2}^{j_k}(x,y) \el[2]{1}^{i_k - j_k}(x,y).\]
				\item
					Take any $n \in \NN_{\geq 1}$. The polynomial $x^n + y^n$ is symmetric, so by assumption we can write
					\[x^n + y^n =   \sum_{k = 1}^m a_k \el[2]{2}^{i_k}(x,y) \el[2]{1}^{j_k}(x,y) = \sum_{k = 1}^m a_k (x y)^{i_k} (x + y)^{j_k}.\]
					This holds for all $x, y \in X$. Setting $y$ to $0$ and replacing $x$ with $x + y$ yields $(x + y)^n = \sum_{k \in \{1, \ldots, m\}, i_k = 0} a_k (x + y)^{j_k}$. Since adding summands can only increase the value in the intrinsic order, 
					\[x^n + y^n \leq \sum_{k = 0}^n \binom{n}{k} x^{n-k} y^k = (x + y)^n = \sum_{k \in \{1, \ldots, m\}, i_k = 0} a_k (x + y)^{j_k} \leq\]
					\[\leq \sum_{k = 1}^m a_k (x y)^{i_k} (x + y)^{j_k} = x^n + y^n\]
					(note that we needed $n \geq 1$ for the first step in this chain). By antisymmetry of $\leq$ we conclude that $x^n + y^n = (x + y)^n$.
			\end{enumerate}
		\end{proof}
		
		\begin{remark}
			In part~\ref{THEOREM: Frobenius and 2-elementarity: 2} of Theorem~\ref{THEOREM: Frobenius and 2-elementarity} the assumption of $X$ being upper-bound is necessary. For example, symmetric polynomial functions over $\RR$ can be written as polynomials of elementary symmetric ones, but Frobenius equalities do not hold.
		\end{remark}

	\section{Elementarity in Idempotent Semirings}\label{Elementarity in Idempotent Semirings}
		
		We proved that in upper-bound semirings the Frobenius property is equivalent to $2$-elementarity. We now turn our attention to idempotent semirings and prove that in this case the Frobenius property is equivalent to full elementarity. In~\cite{symtrop} we proved full elementarity for the tropical semiring, $\RR_{\min}$; what follows is an adaptation of that proof that works for general idempotent Frobenius semirings.
		
		Let $p$ be a polynomial in $n \in \NN$ variables. Define
		\[\Sym(p)(x_1, \ldots, x_n) \dfeq \sum_{\sigma \in S_n} p(x_{\sigma(1)}, \ldots, x_{\sigma(n)}).\]
		This induces symmetrization also on the level of polynomial functions, for which we use the same notation.
		
		\begin{proposition}\label{PROPOSITION: Basic symmetrization facts}
			Let $X$ be an idempotent unital commutative semiring, $n \in \NN$, $a \in X$ and let $p, q \in X[x_1, \ldots, x_n]$ be arbitrary polynomials.
			\begin{enumerate}
				\item
					$\Sym(p)$ is a symmetric polynomial.
				\item
					A polynomial $p$ is symmetric if and only if $p = \Sym(p)$.
				\item
					$\Sym(p + q) = \Sym(p) + \Sym(q)$.
				\item
					$\Sym(a \cdot p) = a \cdot \Sym(p)$.
				\item
					$\el{j} = \Sym(x_1 \ldots x_j)$ for all $j \in \NN$, $1 \leq j \leq n$.
			\end{enumerate}
		\end{proposition}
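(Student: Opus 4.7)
The plan is to dispatch the five parts with essentially routine arguments, with idempotency of $+$ doing the key work in parts (2) and (5) by collapsing all positive multiplicities to $1$. The preliminary observation I would record once and reuse is that in an idempotent unital commutative semiring every $m \in \NN_{\geq 1}$ acts on $X$ as the identity: by induction, $(m{+}1) \cdot y = m \cdot y + y = y + y = y$, so $m \cdot y = y$ for every $y \in X$.

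For (1), I would pick any $\tau \in S_n$ and compute
\[\Sym(p)(x_{\tau(1)}, \ldots, x_{\tau(n)}) = \sum_{\sigma \in S_n} p(x_{\tau\sigma(1)}, \ldots, x_{\tau\sigma(n)}),\]
then reindex via $\sigma \mapsto \tau\sigma$, which permutes $S_n$ bijectively, recovering $\Sym(p)(x_1, \ldots, x_n)$. Parts (3) and (4) follow by distributing the summation over $+$ and pulling the scalar $a$ through, using commutativity of addition and distributivity; neither needs idempotency.

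For (2), the `if' direction is immediate from (1). For the converse, symmetry of $p$ forces each summand $p(x_{\sigma(1)}, \ldots, x_{\sigma(n)})$ to equal $p(x_1, \ldots, x_n)$, so $\Sym(p) = n! \cdot p$, which collapses to $p$ by the preliminary observation. For (5), I would group the permutations of $S_n$ by the unordered $j$-subset $\{\sigma(1), \ldots, \sigma(j)\}$ of $\{1, \ldots, n\}$: each subset $\{i_1, \ldots, i_j\}$ is produced by exactly $j!(n-j)!$ permutations, and by commutativity of multiplication each of them yields the same monomial $x_{i_1} \cdots x_{i_j}$. The sum is therefore $j!(n-j)! \cdot \el{j}$, which collapses to $\el{j}$ by the same argument.

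The closest thing to an obstacle is the combinatorial bookkeeping in (5), but that reduces to the elementary count of permutations fixing a given unordered $j$-subset setwise; the proposition is otherwise a straightforward application of the idempotency hypothesis.
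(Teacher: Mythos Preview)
Your proof is correct. The paper itself offers no proof here---it simply says ``We leave the proof to the reader''---so there is nothing substantive to compare against; your argument supplies exactly the routine verification the authors omit, with idempotency doing the work of collapsing the $n!$ and $j!(n-j)!$ multiplicities in parts (2) and (5).

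One small remark on (1): the paper's definition of ``symmetric polynomial'' is syntactic (representable by a symmetric polynomial \emph{expression}), whereas your reindexing computation is phrased as a functional equality. This is harmless, since the same reindexing $\sigma \mapsto \tau\sigma$ shows that the multiset of monomials in the defining expression $\sum_{\sigma \in S_n} p(x_{\sigma(1)}, \ldots, x_{\sigma(n)})$ is closed under permuting variables, so the expression itself is symmetric in the paper's sense; you may wish to say this explicitly.
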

		
		\begin{proof}
			We leave the proof to the reader.
		\end{proof}
		
		\begin{remark}
			In general the \df{symmetrization} of a polynomial is defined as
			\[\Sym(p)(x_1, \ldots, x_n) \dfeq \frac{1}{n!} \sum_{\sigma \in S_n} p(x_{\sigma(1)}, \ldots, x_{\sigma(n)}),\]
			\ie as the \emph{average} over permutations, so that we have $p = \Sym(p)$ for a symmetric $p$. Of course, this is only well defined when factorials are invertible (equivalently, when positive natural numbers are invertible) in the semiring. That is not a problem in an idempotent semiring though, as we have $1 = 2 = 3 = \ldots$, and the definition of $\Sym$ reduces to just the sum over permutations.
		\end{remark}
				
		\begin{remark}\label{REMARK: symmetry over idempotent semirings}
			Over idempotent unital commutative semirings the notions of syntactically symmetric and semantically symmetric polynomial functions coincide (recall Remark~\ref{REMARK: variants of symmetry}). This is because for a polynomial function $p$ the equality $p = \Sym(p)$ is equivalent to both $p$ being syntactically symmetric and semantically symmetric.
		\end{remark}
		
		The following lemma will be used as an inductive step, with $j$ being the variable, for which we do the induction.
		
		\begin{lemma}\label{LEMMA: Factoring elementary symmetric polynomials}
			Let $X$ be an idempotent Frobenius semiring and $n \in \NN_{\geq 1}$. Then for all $j \in \NN$ with $n \geq j \geq 1$ and $d_1, \ldots, d_j \in \NN$ with $d_1 \geq d_2 \geq \ldots \geq d_j$
			\[\Sym(x_1^{d_1} \ldots x_j^{d_j}) = \el{j}^{d_j} \cdot \Sym(x_1^{d_1 - d_j} \ldots x_{j-1}^{d_{j-1} - d_j})\]
			at the level of polynomial functions.
		\end{lemma}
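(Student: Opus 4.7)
The plan is to establish both $\leq$ and $\geq$ in the intrinsic order and then conclude by antisymmetry using Proposition~\ref{PROPOSITION: Order of idempotent semirings}. A useful preliminary is that iteration of the Frobenius equality gives $\bigl(\sum_{i=1}^r y_i\bigr)^m = \sum_{i=1}^r y_i^m$ for any $m \geq 1$, by induction on $r$. Applied to the explicit description $\el{j} = \sum_{|S|=j} \prod_{i \in S} x_i$ from Definition~\ref{DEFINITION: Polynomials in semirings}, this yields $\el{j}^{d_j} = \sum_{|S|=j} \prod_{i \in S} x_i^{d_j}$, and expanding the right-hand side of the lemma rewrites it as a sum $\sum_{S, \sigma} T_{S,\sigma}$ over $j$-element subsets $S \subseteq \{1,\ldots,n\}$ and $\sigma \in S_n$, with
\[T_{S, \sigma} \dfeq \prod_{i \in S} x_i^{d_j} \cdot \prod_{k=1}^{j-1} x_{\sigma(k)}^{d_k - d_j}.\]

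For the easy inequality LHS $\leq$ RHS, observe that for each $\sigma \in S_n$ the choice $S = \sigma(\{1,\ldots,j\})$ makes the factors combine as $x_{\sigma(k)}^{d_j + (d_k - d_j)} = x_{\sigma(k)}^{d_k}$ for $k < j$, so $T_{S,\sigma}$ equals precisely the LHS summand $x_{\sigma(1)}^{d_1}\ldots x_{\sigma(j)}^{d_j}$. Hence every LHS summand appears among the $T_{S,\sigma}$, giving LHS $\leq$ RHS.

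The main obstacle is the reverse direction: one must bound every $T_{S,\sigma}$ by the LHS, including the case where $\sigma(k) \notin S$ for some $k \in \{1,\ldots,j-1\}$. Set $A \dfeq \sigma(\{1,\ldots,j-1\})$; the counting $|S \setminus A| = j - |S \cap A| = |A \setminus S| + 1$ lets us fix an injection $\phi \colon A \setminus S \to S \setminus A$. For each $k$ with $\sigma(k) \in A \setminus S$, pair the factor $x_{\sigma(k)}^{d_k - d_j}$ in $T_{S,\sigma}$ with the factor $x_{\phi(\sigma(k))}^{d_j}$ coming from $\prod_{i \in S} x_i^{d_j}$, and apply the key Frobenius estimate
\[x_a^{d_j} \cdot x_b^{d_k - d_j} \leq (x_a + x_b)^{d_j} \cdot (x_a + x_b)^{d_k - d_j} = (x_a + x_b)^{d_k} = x_a^{d_k} + x_b^{d_k}\]
to each such pair. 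Distributing the resulting product of binomial bounds expresses $T_{S,\sigma}$ as being $\leq$ a sum of monomials, each of which, by a bookkeeping check of exponents and indices, involves $j$ distinct variables carrying exactly the exponent multiset $\{d_1, \ldots, d_j\}$ and is therefore itself an LHS summand. Summing over $(S, \sigma)$ gives RHS $\leq$ LHS, and antisymmetry of $\leq$ on the idempotent (hence upper-bound) semiring $X$ closes the argument.
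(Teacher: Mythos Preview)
Your proposal is correct and follows essentially the same route as the paper: both arguments prove the two inequalities separately and invoke antisymmetry, use Frobenius to expand $\el{j}^{d_j}$ as a sum of $d_j$-th powers of the $j$-fold products, and bound each right-hand summand via the key estimate $x_a^{d_j}\,x_b^{d_k-d_j} \leq x_a^{d_k} + x_b^{d_k}$ after pairing mismatched indices. Your explicit injection $\phi\colon A\setminus S \to S\setminus A$ is exactly the content of the paper's ``without loss of generality $\pi(i)=\rho(i)$ for $i\in I$'' rearrangement, so the two proofs differ only in bookkeeping notation.
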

		
		\begin{proof}
			Clearly, the equality holds for $d_j = 0$, so assume hereafter that $d_j \geq 1$. We are trying to prove that
			\[
			\sum_{\sigma \in S_n} x_{\sigma(1)}^{d_1} \ldots x_{\sigma(j)}^{d_j} = \Big(\sum_{\pi \in S_n} x_{\pi(1)} \ldots x_{\pi(j)}\Big)^{d_j} \cdot \sum_{\rho \in S_n} x_{\rho(1)}^{d_1 - d_j} \ldots x_{\rho(j-1)}^{d_{j-1} - d_j},
			\]
			which by Frobenius property reduces to
			\[
			\sum_{\sigma \in S_n} x_{\sigma(1)}^{d_1} \ldots x_{\sigma(j)}^{d_j} = \sum_{\pi \in S_n} \big(x_{\pi(1)} \ldots x_{\pi(j)}\big)^{d_j} \cdot \sum_{\rho \in S_n} x_{\rho(1)}^{d_1 - d_j} \ldots x_{\rho(j-1)}^{d_{j-1} - d_j}.
			\]
			An idempotent semiring is upper-bound, so it suffices to prove inequality in both directions.
			
			For any permutation $\sigma \in S_n$, $x_{\sigma(1)}^{d_1} \ldots x_{\sigma(j)}^{d_j} = (x_{\sigma(1)} \ldots x_{\sigma(j)})^{d_j} \cdot x_{\sigma(1)}^{d_1 - d_j} \ldots x_{\sigma(j-1)}^{d_{j-1} - d_j}$, so every summand from the left-hand side also appears on the right-hand side. Thus
			\[\Sym(x_1^{d_1} \ldots x_j^{d_j}) \leq \el{j}^{d_j} \cdot \Sym(x_1^{d_1 - d_j} \ldots x_{j-1}^{d_{j-1} - d_j}).\]
			
			Conversely, take any $\pi, \rho \in S_n$, and consider the summand 
			\[
			s \dfeq (x_{\pi(1)} \ldots x_{\pi(j)})^{d_j} \cdot x_{\rho(1)}^{d_1 - d_j} \ldots x_{\rho(j-1)}^{d_{j-1} - d_j}
			\] from the right-hand side. Some of the variables might appear in both parts of this product; denote $I \dfeq \st{i \in \{1, \ldots, j\}}{\xsome{k}{\{1, \ldots, j\}}{\rho(i) = \pi(k)}}$. Since we can arbitrarily permute the variables in the product $x_{\pi(1)} \ldots x_{\pi(j)}$ without changing its value, we may assume without loss of generality that $\pi(i) = \rho(i)$ for all $i \in I$. Denote $J \dfeq \{1, \ldots, j\} \setminus I$; then for any $i \in J$ (taking into account Frobenius)
			\[x_{\pi(i)}^{d_j} \cdot x_{\rho(i)}^{d_i - d_j} \leq \sum_{k = 0}^{d_i} \binom{d_i}{k} x_{\pi(i)}^{k} x_{\rho(i)}^{d_i - k} = \big(x_{\pi(i)} + x_{\rho(i)}\big)^{d_i} = x_{\pi(i)}^{d_i} + x_{\rho(i)}^{d_i},\]
			so
			\[s \leq \prod_{i \in I} x_{\pi(i)}^{d_i} \cdot \prod_{i \in J} (x_{\pi(i)}^{d_i} + x_{\rho(i)}^{d_i}).\]
			If we use distributivity to fully expand this product, we see that each summand we get also appears in $\sum_{\sigma \in S_n} x_{\sigma(1)}^{d_1} \ldots x_{\sigma(j)}^{d_j}$. Since $+$ is supremum in an idempotent semiring, we conclude that
			\[\Sym(x_1^{d_1} \ldots x_j^{d_j}) \geq \el{j}^{d_j} \cdot \Sym(x_1^{d_1 - d_j} \ldots x_{j-1}^{d_{j-1} - d_j}).\]
		\end{proof}
		
		As stated in the lemma, the equality holds on the level of functions, but it does not hold on the level of polynomials. For example, we have $\Sym[2](x_1^2) = x_1^2 + x_2^2$, whereas $\el[2]{1}^2 \cdot \Sym[2](x_1^0) = (x_1 + x_2)^2 \cdot 1 = x_1^2 + 2 x_1 x_2 + x_2^2$. The same example shows also that the following lemma does not hold on the level of polynomials.
		
		\begin{lemma}\label{LEMMA: Elementary factorization of a monomial}
			Let $X$ be an idempotent Frobenius semiring. Then the symmetrization of any pure monomial (\ie monomial with coefficient $1$) is at the level of functions equal to a product of elementary symmetric polynomials.
		\end{lemma}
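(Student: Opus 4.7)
The plan is to apply Lemma~\ref{LEMMA: Factoring elementary symmetric polynomials} inductively, peeling off one elementary symmetric polynomial at a time. Let $m$ be a pure monomial in $n$ variables. First, I would reduce to the convenient shape required by that lemma. Since $\Sym$ averages over all permutations, the symmetrization is unaffected by re-indexing the variables of $m$; hence we may assume without loss of generality that $m = x_1^{d_1} \cdots x_j^{d_j}$ for some $j \in \{0, 1, \ldots, n\}$, where $d_1 \geq d_2 \geq \ldots \geq d_j \geq 1$ and the remaining variables $x_{j+1}, \ldots, x_n$ carry exponent $0$. The number $j$ will be the induction variable.

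For the base case $j = 0$, the monomial is the constant $1$ and its symmetrization is $\sum_{\sigma \in S_n} 1 = n!$. In any idempotent unital semiring all positive natural numbers collapse to $1$, so $\Sym(1) = 1$, which we interpret as the empty product of elementary symmetric polynomials.

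For the inductive step, assume the claim holds for every pure monomial with at most $j - 1$ active variables. Applying Lemma~\ref{LEMMA: Factoring elementary symmetric polynomials} with the exponents $d_1 \geq \ldots \geq d_j \geq 1$ gives
\[
\Sym(x_1^{d_1} \cdots x_j^{d_j}) \sepeq \el{j}^{d_j} \cdot \Sym(x_1^{d_1 - d_j} \cdots x_{j-1}^{d_{j-1} - d_j}).
\]
The exponents on the right are still non-increasing, and any of them that happen to vanish (which occurs precisely when $d_i = d_j$) simply drop out, yielding a pure monomial in strictly fewer than $j$ variables. By the inductive hypothesis this symmetrization is a product of elementary symmetric polynomials, and multiplying by $\el{j}^{d_j}$ preserves that form.

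Because Lemma~\ref{LEMMA: Factoring elementary symmetric polynomials} already absorbed all the delicate analysis (splitting summands, invoking the Frobenius equality, and exploiting antisymmetry of $\leq$), nothing substantial remains here: the only things to be careful about are (i) the initial reduction to sorted exponents, which is legitimate precisely because $\Sym$ is permutation-invariant, and (ii) the base case, where one must use that $n! = 1$ in an idempotent semiring so that $\Sym(1)$ indeed equals the empty product rather than a non-trivial constant. Neither is really an obstacle; the proof is essentially a one-line induction once the previous lemma is in hand.
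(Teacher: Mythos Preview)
Your proposal is correct and follows essentially the same route as the paper: reduce to sorted exponents using permutation-invariance of $\Sym$, then apply Lemma~\ref{LEMMA: Factoring elementary symmetric polynomials} repeatedly to peel off one $\el{j}$ at a time. The paper is simply terser, writing down the resulting closed form $\Sym(x_1^{d_1}\cdots x_n^{d_n}) = \el{n}^{d_n}\,\el{n-1}^{d_{n-1}-d_n}\cdots\el{1}^{d_1-d_2}$ directly rather than spelling out the induction and base case as you do.
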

		
		\begin{proof}
			Since $\Sym(x_1^{d_1} \ldots x_n^{d_n}) = \Sym(x_{\sigma(1)}^{d_1} \ldots x_{\sigma(n)}^{d_n})$ for any permutation $\sigma \in S_n$, any symmetrization of a pure monomial in $n \in \NN$ variables can be written as $\Sym(x_1^{d_1} \ldots x_n^{d_n})$ where $d_1 \geq d_2 \geq \ldots \geq d_n$. Using Lemma~\ref{LEMMA: Factoring elementary symmetric polynomials} as the inductive step, we then see that
			\[\Sym(x_1^{d_1} \ldots x_n^{d_n}) = \el{n}^{d_n} \cdot \el{n-1}^{d_{n-1} - d_n} \cdot \el{n-2}^{d_{n-2} - d_{n-1}} \cdot \ldots \cdot \el{1}^{d_1 - d_2}.\]
		\end{proof}
		
		A semiring $X$ is \df{$2$-cancellative}\footnote{We use this name because in unital semirings we can rewrite the condition as $2 x = 2 y \impl x = y$, \ie we can cancel $2$.} when $x+x = y+y \impl x = y$ holds for all $x, y \in X$. Any idempotent semiring is $2$-cancellative.
		
		\begin{theorem}\label{THEOREM: Elementarity in idempotent semirings}
			The following statements are equivalent for any unital commutative semiring $X$.
			\begin{enumerate}
				\item
					$X$ is $2$-elementary, upper-bound and $2$-cancellative.
				\item
					$X$ is Frobenius and idempotent.
			\end{enumerate}
			Furthermore, when these statements are satisfied, $X$ is fully elementary.
		\end{theorem}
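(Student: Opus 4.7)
The plan is to prove the two implications separately, with both relying heavily on results established earlier in the section.

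For (2) $\Rightarrow$ (1), idempotency immediately gives upper-boundedness via Proposition~\ref{PROPOSITION: Order of idempotent semirings}, while $2$-cancellativity follows trivially from $x + x = x$ applied to both sides. The substantive content is full elementarity: given any symmetric polynomial $p$, expressed as $\sum_{k} a_k \prod_{j} x_j^{d_{k,j}}$, I would use Proposition~\ref{PROPOSITION: Basic symmetrization facts} to rewrite $p = \Sym(p) = \sum_k a_k \Sym\bigl(\prod_j x_j^{d_{k,j}}\bigr)$ and then apply Lemma~\ref{LEMMA: Elementary factorization of a monomial} to each symmetrised monomial, obtaining a polynomial in $\el{1}, \ldots, \el{n}$.

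For (1) $\Rightarrow$ (2), since being fully elementary entails being $2$-elementary, part~\ref{THEOREM: Frobenius and 2-elementarity: 2} of Theorem~\ref{THEOREM: Frobenius and 2-elementarity} delivers the Frobenius property at once from the upper-bound hypothesis. The remaining task is idempotency. Here Lemma~\ref{LEMMA: 2 in Frobenius semirings} gives $4 = 2$ in $X$, and so for every $x \in X$ the distributive law yields $x + x + x + x = x + x$. Rewriting this as $(x + x) + (x + x) = x + x$ puts it in the form $a + a = b + b$ with $a = x + x$ and $b = x$, whereupon $2$-cancellativity forces $x + x = x$.

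The only nontrivial insight beyond routine bookkeeping is this last step: idempotency is extracted directly from the small identity $4 = 2$ by cancelling a $2$, without any appeal to $n$-elementarity for $n \geq 3$. In effect the argument shows that the weaker hypothesis list ``$2$-elementary, upper-bound, $2$-cancellative'' would already have sufficed in (1); full elementarity is invoked only through its specialisation to $2$-elementarity, and the full strength of that hypothesis is used only in the reverse direction through Lemma~\ref{LEMMA: Elementary factorization of a monomial}.
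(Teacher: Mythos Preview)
Your proposal is correct and follows essentially the same route as the paper: both directions invoke exactly the same auxiliary results (Theorem~\ref{THEOREM: Frobenius and 2-elementarity}, Lemma~\ref{LEMMA: 2 in Frobenius semirings}, Proposition~\ref{PROPOSITION: Basic symmetrization facts}, Lemma~\ref{LEMMA: Elementary factorization of a monomial}) in the same roles. The only cosmetic difference is that for idempotency the paper cancels $2$ in the equation $2\cdot 1 = 2\cdot 2$ to obtain $1=2$ and then multiplies by $x$, whereas you cancel directly in $(x+x)+(x+x)=x+x$; these are the same argument.
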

		
		\begin{proof}
			\begin{itemize}
				\item\proven{$(1 \impl 2)$}
					By Theorem~\ref{THEOREM: Frobenius and 2-elementarity} $X$ is Frobenius. By Lemma~\ref{LEMMA: 2 in Frobenius semirings} we have $2 = 4$ in $X$, that is, $2 \cdot 1 = 2 \cdot 2$. Cancel $2$ to get $1 = 2$, which is idempotency.
				\item\proven{$(2 \impl 1 \land \text{$X$ is fully elementary})$}
					Any idempotent semiring is upper-bound and $2$-cancellative. Now take any symmetric polynomial $p(x_1, \ldots, x_n) = \sum_{k = 1}^m a_k \prod_{j = 1}^n x_j^{d_{k,j}}$. Using Proposition~\ref{PROPOSITION: Basic symmetrization facts} we get (at the level of functions)
					\[
					p(x_1, \ldots, x_n) = \Sym(p)(x_1, \ldots, x_n) = \sum_{k = 1}^m a_k\, \Sym\big(\prod_{j = 1}^n x_j^{d_{k,j}}\big).
					\]
					By Lemma~\ref{LEMMA: Elementary factorization of a monomial} each $\Sym\big(\prod_{j = 1}^n x_j^{d_{k,j}}\big)$ is a product of elementary symmetric polynomials, so $p$ can also be expressed as a polynomial in elementary symmetric polynomials.
			\end{itemize}
		\end{proof}
		
		\begin{corollary}\label{COROLLARY: Elementarity in idempotent semirings}
			An idempotent unital commutative semiring is fully elementary if and only if it is Frobenius.
		\end{corollary}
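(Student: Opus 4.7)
The plan is to derive this corollary as a direct consequence of Theorem~\ref{THEOREM: Elementarity in idempotent semirings}, using the fact that idempotency automatically supplies the two auxiliary hypotheses appearing in condition (1) of that theorem. Specifically, Proposition~\ref{PROPOSITION: Order of idempotent semirings} tells us that any idempotent semiring is upper-bound, and the paragraph just before Theorem~\ref{THEOREM: Elementarity in idempotent semirings} explicitly records that any idempotent semiring is $2$-cancellative (since $x+x = y+y$ reduces to $x = y$ when $+$ is idempotent). So once we fix an idempotent $X$, being fully elementary, upper-bound and $2$-cancellative collapses to just being fully elementary.

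For the forward direction, I would assume $X$ is idempotent and fully elementary. By the two observations above, $X$ automatically satisfies the remaining clauses of statement (1) of Theorem~\ref{THEOREM: Elementarity in idempotent semirings}, so the theorem delivers that $X$ is Frobenius (and idempotent, which we already assumed). For the reverse direction, if $X$ is idempotent and Frobenius, then statement (2) of the theorem holds verbatim, and the theorem's implication $(2 \impl 1)$ gives that $X$ is fully elementary.

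There is no real obstacle here: the corollary is essentially a repackaging of the theorem under a stronger hypothesis (idempotency) that subsumes the side conditions (upper-bound and $2$-cancellative). The only thing to be careful about is to cite both auxiliary facts — Proposition~\ref{PROPOSITION: Order of idempotent semirings} for upper-boundedness and the inline remark for $2$-cancellativity — so the reduction to Theorem~\ref{THEOREM: Elementarity in idempotent semirings} is transparent.
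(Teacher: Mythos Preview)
Your proposal is correct and matches the paper's own proof exactly: the paper simply says the claim follows from Theorem~\ref{THEOREM: Elementarity in idempotent semirings} since any idempotent semiring is upper-bound and $2$-cancellative. Your version just makes the two directions and the citations to Proposition~\ref{PROPOSITION: Order of idempotent semirings} and the inline remark on $2$-cancellativity explicit.
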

		
		\begin{proof}
			The claim follows from Theorem~\ref{THEOREM: Elementarity in idempotent semirings}, since any idempotent semiring is upper-bound and $2$-cancellative.
		\end{proof}

	\section{Elementarity in Supertropical Semirings}\label{SECTION: Elementarity in Supertropical Semirings}
	
		Another way to phrase Theorem~\ref{THEOREM: Elementarity in idempotent semirings} is that as soon as an upper-bound semiring is $2$-cancellative, full elementarity is equivalent to idempotency and Frobenius. This gives us a characterization of full elementarity, but not a fully satisfactory one since there is an important class of upper-bound semirings which in general are not $2$-cancellative: supertropical semirings~\cite{Izhakian20102222, extendedsemiring, Izhakian2011, Izhakian201661}. In this section we prove that supertropical semirings are fully elementary.
		
		The theory of supertropical semirings started with the so-called \df{extended tropical semiring} which is obtained by starting with the tropical semiring, adding a `ghost copy' of it and identifying their least elements (see~\cite{extendedsemiring} for details). Supertropical semirings are what we obtain when we abstract certain useful properties of this extended tropical semiring.  
		
		Let us now formally define these semirings. We start by recalling the relevant definitions. For any semiring $X$ define its \df{ghost}~\cite{Izhakian20102222} as
		\[\nu{X} \dfeq \st{a \in X}{a = a + a}.\]
		The elements in $\nu{X}$ are called \df{ghost elements} of $X$ and the elements in $X \setminus \nu{X}$ are \df{tangible}. Clearly $0 \in \nu{X}$ and if $a, b \in \nu{X}$, then $a + b \in \nu{X}$. Also, if $a \in \nu{X}$ and $x \in X$, then $x \cdot a \in \nu{X}$ and $a \cdot x \in \nu{X}$. In short, $\nu{X}$ is a semiring ideal in $X$. We can now immediately conclude from the definition that $\nu{X}$ is an idempotent semiring.
		
		Define the map $\nu\colon X \to X$ by $\nu(x) \dfeq x + x$. Clearly, $\nu$ is an additive monoid homomorphism, although it is not necessarily a semiring homomorphism (take for example $X = \NN$). Also, $\nu$ is a monotone map (with regard to the intrinsic order) and we have $x \leq \nu(x)$ for all $x \in X$.
		
		By definition, $\nu{X}$ is the set of fixed points of $\nu$. Note that if $X$ is unital (so we have natural numbers in $X$), we can write $\nu(x) = 2x$ and $\nu{X} = \st{a \in X}{a = 2a}$.
		
		\begin{proposition}\label{PROPOSITION: Basic properties of nu}
			Let $X$ be a unital semiring. The following statements are equivalent.
			\begin{enumerate}
				\item
					$2 = 4$ in $X$.
				\item
					$\nu$ is a semiring homomorphism.
				\item
					$\nu$ is a projection (\ie $\nu \circ \nu = \nu$).
				\item
					The image of $\nu$ is $\nu{X}$. In particular, the corestriction $\rstr{\nu}^{\nu{X}}\colon X \to \nu{X}$ exists.
			\end{enumerate}
			If these statements hold, then $\nu{X}$ is a unital (idempotent) semiring with the multiplicative unit $\nu(1)$, so $\rstr{\nu}^{\nu{X}}$ is a unital semiring homomorphism.
		\end{proposition}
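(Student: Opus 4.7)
The plan is to close a cycle of implications $(1)\Rightarrow(2)\Rightarrow(3)\Rightarrow(4)\Rightarrow(1)$, using that $\nu$ is already a monoid homomorphism for addition, so the only nontrivial part of being a semiring homomorphism is preserving multiplication. All four conditions will reduce to the single arithmetic identity $2 = 4$ inside $X$, and the work is essentially unpacking each characterization to that identity.

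For $(1)\Rightarrow(2)$, assuming $2=4$, I compute $\nu(xy) = 2xy$ while $\nu(x)\nu(y) = (2x)(2y) = 4xy$, and these agree because $2=4$; preservation of $0$ and of $+$ is automatic. For $(2)\Rightarrow(3)$, multiplicativity of $\nu$ applied to $1\cdot 1$ gives $\nu(1)=\nu(1)^2$, i.e.\ $2=4$, whence $\nu(\nu(x)) = 2(2x) = 4x = 2x = \nu(x)$; so $(2)$ actually implies $(1)$ and hence $(3)$. For $(3)\Rightarrow(4)$, the inclusion $\nu X\subseteq\im\nu$ always holds (any $a\in\nu X$ satisfies $a = a+a = \nu(a)$), and the reverse inclusion follows from $\nu(x) = \nu(\nu(x)) = \nu(x)+\nu(x)$, which places $\nu(x)$ in $\nu X$. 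For $(4)\Rightarrow(1)$, the element $\nu(1) = 2$ lies in $\im\nu = \nu X$, so $2 = 2+2 = 4$. I expect no real obstacle here; the main care needed is simply to use only that $\nu$ is additive and $X$ is unital commutative, and not to accidentally invoke idempotency of $X$.

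For the final ``in particular'' clause, assuming the four equivalent conditions, I note that $\nu X$ is already an idempotent semiring by the earlier observation preceding the proposition, and I only need to exhibit its multiplicative unit and verify that $\rstr{\nu}^{\nu X}$ preserves it. For any $a\in\nu X$ one computes $a\cdot\nu(1) = a\cdot(1+1) = a+a = a$ using $a\in\nu X$, and symmetrically on the other side, so $\nu(1)$ is a multiplicative unit of $\nu X$. Since $\rstr{\nu}^{\nu X}(1) = \nu(1)$ by definition, and $\rstr{\nu}^{\nu X}$ inherits additivity and multiplicativity from $\nu$ (using $(2)$), it is a unital semiring homomorphism, completing the proposition.
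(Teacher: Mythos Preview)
Your proof is correct and essentially identical to the paper's: the same cycle of implications reduced to the identity $2=4$, with the same computations for each step. The only cosmetic differences are that the paper splits your $(2)\Rightarrow(3)$ into $(2)\Rightarrow(1)$ and $(1)\Rightarrow(3)$, and for the unit of $\nu X$ the paper writes $\nu(1)\cdot\nu(x)=\nu(1\cdot x)=\nu(x)$ via the homomorphism property rather than your direct $a\cdot\nu(1)=a+a=a$; both are equally short and valid.
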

		
		\begin{proof}
			\begin{itemize}
				\item\proven{$(1 \impl 2)$}
					We know that $\nu$ is an additive monoid homomorphism. It remains to check that $\nu$ preserves multiplication: $\nu(x) \cdot \nu(y) = 2x \cdot 2y = 4 x y = 2 x y = \nu(x y)$.
				\item\proven{$(2 \impl 1)$}
					$2 = \nu(1) = \nu(1 \cdot 1) = \nu(1) \cdot \nu(1) = 2 \cdot 2 = 4$.
				\item\proven{$(1 \impl 3)$}
					$\nu(\nu(x)) = 2 \cdot 2x = 4x = 2x = \nu(x)$.
				\item\proven{$(3 \impl 4)$}
					Since $\nu{X}$ is the set of fixed points of $\nu$, we always have $\nu{X} \subseteq \im(\nu)$. For the reverse inclusion, take any $x \in X$. Then $2 \cdot \nu(x) = \nu(\nu(x)) = \nu(x)$, so $\nu(x) \in \nu{X}$.
				\item\proven{$(4 \impl 1)$}
					Since $2 = \nu(1) \in \im(\nu) = \nu{X}$, we get $2 = 2 + 2 = 4$.
			\end{itemize}
			
			Assume now that the given equivalent statements hold. Then for any $x \in X$ we have $\nu(1) \cdot \nu(x) = \nu(1 \cdot x) = \nu(x)$, and likewise $\nu(x) \cdot \nu(1) = \nu(x)$, so $\nu(1)$ is indeed the multiplicative unit in $\nu{X}$.
		\end{proof}
		
		We recall the definition from~\cite{Izhakian201661}.
		
		\begin{definition}
			A \df{supertropical semiring} is a unital commutative semiring which satisfies the following:
			\begin{itemize}
				\item
					the equivalent statements from Proposition~\ref{PROPOSITION: Basic properties of nu},
				\item
					for all $a, b \in X$ with $\nu(a) \neq \nu(b)$ we have $a + b \in \{a, b\}$,
				\item
					for all $a, b \in X$ with $\nu(a) = \nu(b)$ we have $a + b = \nu(a) = \nu(b)$.
			\end{itemize}
		\end{definition}
		
		\begin{proposition}\label{PROPOSITION: basic properties of supertropical semirings}
			The following holds for any supertropical semiring $X$.
			\begin{enumerate}
				\item
					$\nu{X}$ is \df{bipotent} (\ie for all $a, b \in \nu{X}$ we have $a + b \in \{a, b\}$), and therefore a linearly ordered idempotent semiring.
				\item
					$X$ is upper-bound. The restriction of $\leq$ from $X$ to $\nu{X}$ matches the intrinsic order on $\nu{X}$.
				\item
					$X$ is Frobenius and $2 = 3$ in $X$.
			\end{enumerate}
		\end{proposition}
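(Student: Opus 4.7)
The plan is to handle the three claims in order, using the supertropical axioms together with the established properties of $\nu$. For part~(1), every $a \in \nu X$ satisfies $a = a + a = \nu(a)$, so for $a, b \in \nu X$ the condition $\nu(a) = \nu(b)$ is equivalent to $a = b$. The two supertropical sum axioms then immediately yield bipotency: when $a \neq b$ the first axiom gives $a + b \in \{a, b\}$, and when $a = b$ the claim is trivial. Proposition~\ref{PROPOSITION: Order of idempotent semirings} then upgrades $\nu X$ to a linearly ordered idempotent semiring.

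For part~(2), suppose $a + u = b$ and $b + v = a$. Applying the semiring homomorphism $\nu$ (Proposition~\ref{PROPOSITION: Basic properties of nu}) yields $\nu(a) \leq \nu(b) \leq \nu(a)$ in $\nu X$, hence $\nu(a) = \nu(b)$ by antisymmetry from part~(1). A case analysis of $a + u = b$ finishes the job: if $\nu(a) \neq \nu(u)$, then $b \in \{a, u\}$ by the first supertropical sum axiom, and $b = u$ would contradict $\nu(a) = \nu(b)$, so $b = a$; if $\nu(a) = \nu(u)$, then $b = \nu(a) \in \nu X$, and the symmetric analysis of $b + v = a$ then forces $a = b$. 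For the order-matching claim, given $a, b \in \nu X$ with $a + z = b$ in $X$, the idempotency $a = a + a$ yields $a + b = (a + a) + z = a + z = b$, \ie the intrinsic order condition on the idempotent semiring $\nu X$.

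For part~(3), I first establish $2 = 3$: $2 = 4$ holds by Proposition~\ref{PROPOSITION: Basic properties of nu}, so together with upper-boundness from~(2) we get $2 \leq 3 \leq 4 = 2$, hence $2 = 3$; inductively $m = 2$ for all $m \in \NN_{\geq 2}$. Consequently every binomial coefficient $\binom{n}{k}$ with $1 \leq k \leq n-1$ reduces to $2$ in $X$, and the middle terms of the binomial expansion of $(x + y)^n$ all take the form $\nu(x^{n-k} y^k)$. To prove Frobenius, fix $n \in \NN_{\geq 1}$ and $x, y \in X$, and split on the comparison of $\nu(x)$ and $\nu(y)$ in the linearly ordered $\nu X$. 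If $\nu(x) = \nu(y)$, the equal-$\nu$ axiom forces both $x + y = \nu(x)$ and $x^n + y^n = \nu(x^n) = \nu(x)^n$, so both sides of Frobenius equal $\nu(x)^n$. If $\nu(x) > \nu(y)$ (without loss of generality), the unequal-$\nu$ axiom gives $x + y \in \{x, y\}$, and comparing $\nu$-values pins down $x + y = x$, hence $(x + y)^n = x^n$; when additionally $\nu(x)^n > \nu(y)^n$, the same $\nu$-comparison gives $x^n + y^n = x^n$.

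The main obstacle is the edge sub-case $\nu(x) > \nu(y)$ combined with $\nu(x)^n = \nu(y)^n$, which the abstract axioms allow because multiplication in a bipotent idempotent semiring need not be strictly monotone at every pair. Here monotonicity still forces $\nu(x)^{n-k}\nu(y)^k = \nu(x)^n$ for every $0 \leq k \leq n$, so every summand of $(x + y)^n = x^n + y^n + \sum_{k=1}^{n-1} \nu(x^{n-k} y^k)$ has the same $\nu$-value $\nu(x)^n$; iterating the equal-$\nu$ axiom collapses the entire sum to $\nu(x)^n$. Comparing this with $(x + y)^n = x^n$ forces $x^n = \nu(x)^n \in \nu X$, and a final application of the equal-$\nu$ axiom gives $x^n + y^n = \nu(x^n) = x^n = (x + y)^n$.
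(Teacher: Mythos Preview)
Your proof is correct and self-contained, whereas the paper's proof simply cites external references: bipotency of $\nu X$ is attributed to~\cite{Izhakian20112431}, upper-boundedness and the order-matching statement to~\cite{Izhakian201661}, and the Frobenius property to~\cite{Izhakian20102222}, with the $2=3$ claim then deduced from Lemma~\ref{LEMMA: 2 in Frobenius semirings}. Your argument, by contrast, works directly from the supertropical axioms: for~(1) you observe that on $\nu X$ the map $\nu$ is the identity so the two sum axioms collapse to bipotency; for~(2) you push the inequalities $a \leq b \leq a$ through $\nu$ to land in the already-antisymmetric $\nu X$ and then run a case split on the sum axioms to lift $\nu(a)=\nu(b)$ back to $a=b$; for~(3) you expand $(x+y)^n$ binomially, use $2=3$ to reduce every interior coefficient to $\nu$, and handle the delicate sub-case $\nu(x)>\nu(y)$ yet $\nu(x)^n=\nu(y)^n$ by showing all summands share a single $\nu$-value and hence collapse. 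The paper's approach is economical but opaque without the cited sources; yours makes the proposition genuinely independent of the literature and in particular exposes exactly how the two sum axioms drive each conclusion. One minor omission: in the order-matching part of~(2) you only argue the nontrivial direction (from $X$'s order to $\nu X$'s), but the converse is immediate since any witness in $\nu X$ is already a witness in $X$.
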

		
		\begin{proof}
			\begin{enumerate}
				\item
					$\nu{X}$ is bipotent by~\cite{Izhakian20112431}. Hence for any elements $a, b \in \nu{X}$ one of them is their common upper bound $a + b$, so they must be comparable.
				\item
					See~\cite{Izhakian201661}(Proposition~5.7).
				\item
					By~\cite{Izhakian20102222}(Proposition~3.7) and Lemma~\ref{LEMMA: 2 in Frobenius semirings}.
			\end{enumerate}
		\end{proof}
		
		In any semiring we define the \df{strict order relation} $<$ in the expected way: $a < b$ means $a \leq b$ and $a \neq b$. The relation $<$ is irreflexive, asymmetric and transitive. If the intrinsic order $\leq$ is a linear partial order, then $<$ satisfies the law of trichotomy: for any $a, b \in X$ exactly one of the statements $a < b$, $a = b$, $b < a$ holds.
		
		We examine some properties of fibers of $\nu$ in supertropical semirings.
		
		\begin{lemma}\label{LEMMA: Fibers of nu}
			Let $X$ be a supertropical semiring and $a \in X$.
			\begin{enumerate}
				\item
					The fiber $\nu^{-1}(a)$ is non-empty if and only if $a \in \nu{X}$. In this case $a$ is the largest element (with regard to the intrinsic order $\leq$) in $\nu^{-1}(a)$.
				\item
					The elements in $\nu^{-1}(a) \setminus \{a\}$ are incomparable.
			\end{enumerate}
		\end{lemma}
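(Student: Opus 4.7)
My plan is to treat the two parts separately, using the supertropical axioms together with Proposition~\ref{PROPOSITION: Basic properties of nu}.

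For part~(1), I would first observe that if the fiber $\nu^{-1}(a)$ contains some element $x$, then $a = \nu(x)$ lies in the image of $\nu$, which by Proposition~\ref{PROPOSITION: Basic properties of nu}(4) equals $\nu X$. Conversely, if $a \in \nu X$, then $a = a + a = \nu(a)$, so $a$ itself is in $\nu^{-1}(a)$; this also proves non-emptiness. For the claim that $a$ is the largest element of $\nu^{-1}(a)$, I would invoke the observation made just before Proposition~\ref{PROPOSITION: Basic properties of nu} that $x \leq \nu(x)$ for every $x \in X$: if $x \in \nu^{-1}(a)$ then $x \leq \nu(x) = a$.

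For part~(2), I would argue by contradiction, or rather by a case analysis that rules out the non-trivial case. Suppose $x, y \in \nu^{-1}(a) \setminus \{a\}$ with $x \leq y$; I want to conclude $x = y$. By definition of the intrinsic order there is $z \in X$ with $x + z = y$. Now I split on whether $\nu(x)$ and $\nu(z)$ are equal. If $\nu(x) = \nu(z)$, the third supertropical axiom gives $y = x + z = \nu(x) = a$, contradicting $y \notin \{a\}$. If $\nu(x) \neq \nu(z)$, the second supertropical axiom forces $y = x + z \in \{x, z\}$; the option $y = z$ implies $\nu(y) = \nu(z) \neq \nu(x) = \nu(y)$ (using $x, y \in \nu^{-1}(a)$), a contradiction, so we must have $y = x$.

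I do not expect any serious obstacle here; the statement is essentially unpacking the two-clause supertropical addition axiom. The only place to be careful is to make sure both parts of the case analysis in~(2) actually use $x, y \neq a$ (they do: the first case needs $y \neq a$, the second needs $\nu(x) = \nu(y) = a$ which uses both membership conditions).
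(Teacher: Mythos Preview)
Your proposal is correct and follows essentially the same approach as the paper: both take a witness $z$ (the paper calls it $u$) for $x \leq y$ from the definition of the intrinsic order and then split on whether $\nu(z)$ coincides with $\nu(x)=a$, invoking the two supertropical addition axioms to reach a contradiction in each case. The only cosmetic difference is that the paper phrases the case split via the inequality $2u \leq 2y = a$ and then asks whether $2u = a$ or $2u < a$, whereas you go directly to $\nu(x)=\nu(z)$ versus $\nu(x)\neq\nu(z)$; your formulation is arguably a bit cleaner.
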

		
		\begin{proof}
			\begin{enumerate}
				\item
					Straightforward.
				\item
					Suppose we have $x, y \in \nu^{-1}(a) \setminus \{a\}$ with $x \leq y$ and $x \neq y$. Then $x < y < a$. There exists such $u \in X$ that $x + u = y$, in particular $u \leq y$. It follows that $2u \leq 2y = a$. We cannot have $2u = a$, as that implies $y = x + u = a$, a contradiction. Thus $2u < a = 2x$, so $x = x + u = y$, another contradiction.
			\end{enumerate}
		\end{proof}
		
		In summary, fibers of $\nu$ look like this: \raisebox{2.5ex}{$\xymatrix@-3ex{& \circ &\\ \bullet \ar[ru] & \bullet \ar[u] & \bullet \ar[lu]}$} --- that is, a bunch of (possibly zero) incomparable (tangible) elements, with a ghost element on the top. The entire supertropical semiring is then a disjoint union of such fibers, with the ghost part linearly ordered.
		
		\begin{lemma}\label{LEMMA: Strict inequality in supertropical semirings}
			Let $X$ be a supertropical semiring and $a, b, x, y \in X$.
			\begin{enumerate}
				\item
					We have
					\[a + b = b \iff a < b \lor (a \leq b \land b \in \nu{X}).\]
					In particular, if $a < b$, then $a + b = b$.
				\item
					If $b \cdot x$ is tangible, then
					\[a < b \iff a \cdot x < b \cdot x.\]
				\item
					If $b \cdot y$ is tangible, $a < b$ and $x \leq y$, then $a \cdot x < b \cdot y$.
				\item
					If $y$ is tangible, we have
					\[x < y \iff 2x < y \iff 2x < 2y.\]
				\item \label{LEMMA: Strict inequality in supertropical semirings:5}
					If $x$ and $y$ are tangible, exactly one of the following holds: $x < y$, $y < x$ or $x$ and $y$ are in the same fiber of $\nu$.
			\end{enumerate}
		\end{lemma}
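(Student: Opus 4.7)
The plan is to prove the five parts sequentially, since (2)--(5) all invoke (1) and (4)--(5) further depend on (2). The structural backbone is the defining dichotomy of a supertropical semiring: $a+b \in \{a,b\}$ when $\nu(a) \neq \nu(b)$, and $a+b = \nu(a) = \nu(b)$ when the two agree, together with the description of fibers supplied by Lemma~\ref{LEMMA: Fibers of nu}.

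For (1), I would split the direction $(\Leftarrow)$ on the disjunct. If $a < b$ with $\nu(a) \neq \nu(b)$, then $a+b \in \{a,b\}$ cannot equal $a$ (else $b \leq a$), so $a+b = b$; if instead $\nu(a) = \nu(b)$ and $b \in \nu X$, then $a+b = \nu(b) = b$. Conversely, $a+b = b$ forces $a \leq b$, and $a = b$ would yield $b = 2b$, hence $b \in \nu X$. For (2), the forward direction multiplies $a+b = b$ by $x$ to obtain $ax + bx = bx$, and equality $ax = bx$ would force $bx = 2bx \in \nu X$, contradicting tangibility of $bx$. For the converse, knowing $(a+b)x = bx$, I would rule out every alternative to $a < b$: the cases $a = b$ and $b < a$ yield $(a+b)x = 2ax$ and $(a+b)x = ax$, producing either $bx \in \nu X$ or $ax = bx$, and if $a, b$ lie in a common $\nu$-fiber with $a \neq b$, then $a+b = \nu(a) \in \nu X$ again gives $bx \in \nu X$. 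Part (3) combines (1) and (2): (2) yields $ay < by$, monotonicity gives $ax \leq ay$, so $ax + by \leq ay + by = by$, and (1) upgrades this to $ax < by$ via tangibility of $by$.

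For (4), the equivalence $x < y \iff 2x < y$ follows by adding $x$ to $x + y = y$, noting that $2x = y$ would place $y$ into $\nu X$. For $2x < y \iff 2x < 2y$, the forward direction uses the strict inequality $y < 2y = \nu(y)$ (available since $y$ is tangible), while the reverse direction proceeds from $\nu(2x) \neq \nu(y)$: the option $2x + y = 2x$ would give $y \leq 2x$, hence by monotonicity $2y \leq 2x$, contradicting $2x < 2y$, so $2x + y = y$ and strictness follows from the ghost/tangible distinction. Then (5) is the trichotomy on $\nu(x), \nu(y)$ in the bipotent $\nu X$: equality is the same-fiber case, and $2x < 2y$ or $2y < 2x$ translate via (4) into $x < y$ or $y < x$; mutual exclusivity follows from (4), since $x < y$ forces $2x < 2y$ and thereby excludes both other cases.

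The main obstacle I anticipate is the converse direction of (2): without a cancellation law, one must do a genuine case analysis against the supertropical fiber structure rather than a routine manipulation of $a+b$. Once this is in place, parts (3)--(5) are essentially mechanical applications of (1), (2), and the bipotence of $\nu X$.
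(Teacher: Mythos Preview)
Your proposal is correct and matches the paper's approach closely; in particular, your case analysis for the converse of (2) spells out what the paper compresses into ``the result quickly follows'' from item (1). The one substantive variation is in (4): for $2x < 2y \Rightarrow 2x < y$ you argue directly via $\nu(2x) = 2x \neq 2y = \nu(y)$ and the supertropical dichotomy on $2x + y$, whereas the paper instead observes that $2(x+y) = 2y$ places $y$ and $x+y$ in the same $\nu$-fiber and then invokes the fiber structure --- your route is a touch more direct.
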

		
		\begin{proof}
			\begin{enumerate}
				\item
					Suppose $b = a + b \geq a$. If $a \neq b$, then $a < b$. If $a = b$, we have $b = a + b = 2b$.
					
					Conversely, suppose $a \leq b$ and $b = 2b$. Then $b \leq a + b \leq 2b = b$.
					
					Finally, suppose $a < b$. If $2a \neq 2b$, then $a + b = b$ follows from the definition of a supertropical semiring. Suppose $2a = 2b$ (so $a$, $b$ are in the same fiber of $\nu$) while still $a < b$; then $b = \nu(b)$ by Lemma~\ref{LEMMA: Fibers of nu}, and consequently $a + b = \nu(b) = b$.
				\item
					If $b \cdot x$ is tangible, then $b$ (and $x$) must also be tangible, as $\nu{X}$ is an ideal. If $a + b = b$, then $a \cdot x + b \cdot x = (a + b) \cdot x = b \cdot x$. Using the previous item of the lemma, the result quickly follows. 
					
					Conversely, suppose $a \cdot x < b \cdot x$. Let us separate the cases depending on whether $a$ and $b$ are in the same fiber of $\nu$. If they are, \ie $2a = 2b$, then also $2ax = 2bx$, implying that $ax$ and $bx$ are also in the same fiber of $\nu$. Since $ax < bx$, Lemma~\ref{LEMMA: Fibers of nu} implies that $bx$ is a ghost. This leads to a contradiction. If $a$ and $b$ are not in the same fiber, then by the definition of the supertropical semiring $a + b \in \{a, b\}$. Assume first that $a+b=a$. Then $a \geq b$ and by monotonicity of multiplication $ax \geq bx$, which is a contradiction since $ax < bx$. If $a+b = b$, then according to item 1 of this lemma $a < b$ or $a \leq b \land b \in \nu{X}$. The second condition would force $b$ to be a ghost, which we know it is not. So $a<b$ as we tried to prove.
				\item
					Follows easily from the previous item.
				\item
					Suppose $x < y$. By the first item $x + y = y$, so $y = x + y = x + x + y$. Again by the first item we conclude $2x < y$.
					
					Clearly $2x < y$ implies the other two inequalities.
					
					Assume $2x < 2y$. By the first item $2y = 2x + 2y = 2(x + y)$, so $y$ and $x + y$ are in the same fiber. Since $y \leq x + y$, we have only two options. The first option is that $y = x + y$, in which case $x < y$, and we are done. The second option is that $y < x + y$ and $x + y = 2y$. 
			We cannot have $\nu(x) = \nu(y)$, since $2x < 2y$.
			Therefore $\nu(x) \neq \nu(y)$ and $2y = x+y \in \{x, y\}$. We have $x < 2y$, therefore $x + y = y$ and $x < y$ by the first item. 
				\item
					The ghost ideal $\nu{X}$ is linearly ordered, so we have $2x < 2y$, $2y < 2x$ or $2x = 2y$. The claim now follows from the previous item.
			\end{enumerate}
		\end{proof}
		
		\begin{lemma}\label{LEMMA: Tangible sum in a supertropical semiring}
			Let $X$ be a supertropical semiring.
			\begin{enumerate}
				\item
					$X$ is a bounded join-semilattice\footnote{A \df{bounded join-semilattice} is a partial order, in which we have joins (= suprema = least upper bounds) of all finite subsets. Equivalently, we need to have the join of the empty set (which is the smallest element) and of any pair of elements.}, with $\sup\emptyset = 0$ and for any $x, y \in X$
					\[\sup\{x, y\} = \begin{cases} x + y & \text{if $x \neq y$,} \\ x & \text{if $x = y$.} \end{cases}\]
					
				\item
				Supremum and $\nu$ commute, \ie for all $a_1, \ldots, a_n \in X$
				\[
				\nu(\sup\{a_1, \ldots, a_n\}) = \sup\{\nu(a_1), \ldots, \nu(a_n)\}.
				\] 
				\item
					Take any $n \in \NN$ and $a_1, \ldots, a_n \in X$. Let $s \dfeq a_1 + \ldots + a_n$ and ${M \dfeq \sup\{a_1, \ldots, a_n\}}$. Then $2s = 2M$ and
					\begin{align*}
						s \notin \nu{X} &\iff M \notin \nu{X} \land \exactlyone{i}{\{1, \ldots, n\}}{a_i = M} \\
						&\iff M \notin \nu{X} \land \exactlyone{i}{\{1, \ldots, n\}}{a_i = M = s}.
					\end{align*}
			\end{enumerate}
		\end{lemma}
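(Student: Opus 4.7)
Part 1 follows by unpacking the available structure. The intrinsic order $\leq$ is a partial order because a supertropical semiring is upper-bound, and $0$ is its minimum, so $\sup\emptyset = 0$. For $x = y$ there is nothing to prove; for $x \neq y$ I will show $x + y = \sup\{x, y\}$. The inequality $x, y \leq x + y$ is automatic, so I need only show that any common upper bound $z$ of $x$ and $y$ dominates $x + y$, which I do by splitting on whether $x$ and $y$ share a $\nu$-fiber.

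If $\nu(x) \neq \nu(y)$, linearity of $\nu X$ lets me assume $\nu(x) < \nu(y)$; the supertropical axiom gives $x + y \in \{x, y\}$, and applying $\nu$ forces $x + y = y \leq z$. If $\nu(x) = \nu(y)$, then $x + y$ is the common ghost $\nu(x)$ of the fiber. The fiber picture from Lemma~\ref{LEMMA: Fibers of nu} says that the elements in a single fiber consist of several incomparable tangibles with at most one ghost on top, so any common upper bound $z$ of $x$ and $y$ must either equal this ghost or lie in a strictly higher fiber---a tangible $z$ distinct from $x$ in the same fiber is excluded by the incomparability clause of the lemma, while $\nu(z) < \nu(x)$ is inconsistent with $x \leq z$ by monotonicity of $\nu$ (or by Lemma~\ref{LEMMA: Strict inequality in supertropical semirings}). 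In either case $\nu(x) \leq z$ as required.

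For part 2, I will restrict the sum to the top fiber. Set $F \dfeq \nu(M)$, $I_F \dfeq \{i : \nu(a_i) = F\}$, $I_L \dfeq \{i : \nu(a_i) < F\}$, and write $s = s_F + s_L$ accordingly. Since $\nu(s_F) = F > \nu(s_L)$, the supertropical axiom for elements in different fibers collapses $s$ to $s_F$. All summands of $s_F$ share the fiber $F$, so iterating the axiom for elements in the same fiber yields $s_F = F \in \nu X$ when $|I_F| \geq 2$, while $s_F = a_{i_0}$ when $I_F = \{i_0\}$. By part~1 the join $M$ equals the join of the top-fiber $a_i$'s (lower-fiber elements are strictly dominated), and inspection of the fiber structure shows that $M$ is tangible precisely when $I_F = \{i_0\}$ with $a_{i_0}$ tangible, in which case $M = a_{i_0} = s$. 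Chaining these equivalences delivers all three forms on the right-hand side. The main obstacle I anticipate is the same-fiber subcase of part~1, where I need to rule out that a common upper bound can be a third tangible in the common fiber of $x$ and $y$; this is where the incomparability of distinct tangibles in a fiber (Lemma~\ref{LEMMA: Fibers of nu}) plays its decisive role.
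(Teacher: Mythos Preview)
Your part~1 is correct and close to the paper's; you are in fact more explicit than the paper about why a common upper bound $z$ in a strictly higher fiber dominates the ghost $\nu(x)$ (which holds because $\nu(\nu(x)) \neq \nu(z)$ forces $\nu(x) + z \in \{\nu(x), z\}$, and applying $\nu$ picks out $z$).

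In part~2 your fiber decomposition $s = s_F + s_L = s_F$ is sound and correctly yields $s \notin \nu X \iff |I_F| = 1$ with $a_{i_0} \notin \nu X$. But the claim ``$M$ is tangible precisely when $I_F = \{i_0\}$ with $a_{i_0}$ tangible'' is false as a biconditional: take $n = 2$ with $a_1 = a_2$ tangible; then $M = a_1$ is tangible while $|I_F| = 2$. Chained as you wrote them, your equivalences would give $s \notin \nu X \iff M \notin \nu X$, which this very example refutes ($s = 2a_1 \in \nu X$). The repair is to keep the uniqueness clause on the right:
\[|I_F| = 1 \text{ and } a_{i_0} \notin \nu X \ \iff\ M \notin \nu X \ \land\ \exactlyone{i}{\{1,\ldots,n\}}{a_i = M}.\]
For the non-obvious direction: if $M$ is tangible and $j \in I_F$, then $a_j \leq M$ with $\nu(a_j) = \nu(M)$; since $M$ is not the ghost of its fiber, Lemma~\ref{LEMMA: Fibers of nu} forces $a_j = M$, and then uniqueness of the index with $a_i = M$ gives $|I_F| = 1$. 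So the uniqueness condition is essential input, not a by-product of $M$ being tangible.

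With that fix your fiber-splitting works, and it is a genuinely different route from the paper's. The paper instead observes $M \leq s \leq 2s = 2M$ to place $M$ and $s$ in the same $\nu$-fiber, then uses incomparability of tangibles in a fiber to get $s = M$ whenever $s$ is tangible, and only afterwards analyzes the index set $\{i : 2a_i = 2M\}$. Both arguments ultimately rest on Lemma~\ref{LEMMA: Fibers of nu}; yours computes $s$ explicitly from the top fiber, the paper's is slightly more compressed.
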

		
		\begin{proof}
			\begin{enumerate}
				\item
					If $x = y$, then $\sup\{x, y\} = x = y$. If $2x \neq 2y$, then the upper bound $x + y$ is necessarily the least since it equals $x$ or $y$. Assume now $2x = 2y \dfeqrev a$ and $x \neq y$. Suppose $x = 2x$; then $x + y = 2x = x$, so again we are done. Likewise for $y = 2y$. The only remaining case is $x, y \in \nu^{-1}(a) \setminus \{a\}$. By the definition of supertropical semirings it follows that $x + y = a$. Suppose that $b \in X$ is an upper bound for $x$ and $y$ which is different from $a$. Then $x < b$ and $y < b$, since $x$ and $y$ are incomparable. We obtain that $a = x + y \leq 2b$. If $a = 2b$, then $b$ and $x$ lie in the same fiber of $\nu$ and they must be incomparable (being different from $a$) by Lemma~\ref{LEMMA: Fibers of nu}, contradicting $x < b$. Hence $2a = a < 2b$. If $b$ is ghost, then $b = 2b$ and $a < b$, as required. If $b$ is tangible, then $a < b$ by item 4 of Lemma~\ref{LEMMA: Strict inequality in supertropical semirings} we obtain that $x + y = \sup\{x, y\}$, in which case it follows from Lemma~\ref{LEMMA: Fibers of nu} that $x + y = a = \sup\{x, y\}$.
					\item
					The statement is obvious for $n=0$ and $n=1$. Let $n=2$. For $a_1 = a_2$, $\nu(\sup\{a_1, a_2\}) = \nu(a_1) = \sup \{ \nu(a_1), \nu(a_2)\}$ by item 1 of this lemma. For $a_1 \neq a_2$ and $\nu(a_1)\neq \nu(a_2)$, 
					\[
					\nu(\sup\{a_1, a_2\}) = \nu(a_1 + a_2) = \nu(a_1) + \nu(a_2)= \sup \{ \nu(a_1), \nu(a_2)\}.
					\] 
 For $a_1 \neq a_2$ and $\nu(a_1)= \nu(a_2)$, 
					\[
					\nu(\sup\{a_1, a_2\}) = \nu(a_1 + a_2) = \nu(a_1) + \nu(a_2)= 2\nu(a_1) =4 a_1 = 2 a_1 = \nu(a_1) = \sup \{ \nu(a_1), \nu(a_2)\}.
					\] 
					For higher $n$ inductively
					\[
					\begin{array}{lcl}
					\nu(\sup\{a_1, a_2, \ldots, a_n \}) &=& \nu(\sup\{a_1, \sup\{ a_2, \ldots, a_n \} \}) \\
					&=& \sup\{ \nu(a_1), \nu(\sup\{ a_2, \ldots, a_n \})\} \\
					&=& \sup\{ \nu(a_1), \sup\{ \nu(a_2), \ldots, \nu(a_n)\}\}\\
					&=& \sup\{ \nu(a_1), \nu(a_2), \ldots, \nu(a_n)\}.\\
					\end{array}
					\]
				\item
					Recall from the first item of Proposition~\ref{PROPOSITION: basic properties of supertropical semirings} that $\nu{X}$ is an idempotent semiring. Hence, by the fourth item of Proposition~\ref{PROPOSITION: Order of idempotent semirings} the sum in $\nu{X}$ is the same as supremum. Taking into account the previous item of this lemma, we get
					\[2M = 2\sup\{a_1, a_2, \ldots, a_n \} = \sup\{2a_1, 2a_2, \ldots, 2a_n \} = 2a_1 + 2a_2 + \ldots + 2a_n = 2s.\]
					
					We first prove that
					\[M \notin \nu{X} \land \exactlyone{i}{\{1, \ldots, n\}}{a_i = M = s} \iff M \notin \nu{X} \land \exactlyone{i}{\{1, \ldots, n\}}{a_i = M}.\]
					Direction~$\Rightarrow$ is clear. We now prove the reverse direction. If there exists exactly one $a_i$, equal to $M$, then $a_i > a_j$ for all other indices $j$. It follows $a_i + a_j = a_i$ by the first item of Lemma~\ref{LEMMA: Strict inequality in supertropical semirings}. Hence $a_i = s = M$.
					
					We now prove the equivalence
					\[M \notin \nu{X} \land \exactlyone{i}{\{1, \ldots, n\}}{a_i = M = s} \iff s \notin \nu{X}.\]
					The implication $\Rightarrow$ is obvious.
							
					It remains to show that
					\[s \notin \nu{X} \implies M \notin \nu{X} \land \exactlyone{i}{\{1, \ldots, n\}}{a_i = M = s}.\]
					Suppose $s \notin \nu{X}$. We always have $M \leq s \leq 2s = 2M$, so $s \in \nu^{-1}(2M) \setminus \{2M\}$. By Lemma~\ref{LEMMA: Fibers of nu} we get $s = M$, so also $M \notin \nu{X}$.
					
					By the previous item of this lemma $2M = \sup \{2a_1, \ldots, 2a_n\}$. Since $\nu{X}$ is linearly ordered (first item of Proposition~\ref{PROPOSITION: basic properties of supertropical semirings}), this supremum is attained. Let 
					\[
					I \dfeq \st{i \in \{1, \ldots, n\}}{2a_i = 2M};
					\]
					then ${2s = 2M = \sup \st{2a_i}{i \in I}}$.
					
					Let $i \in I$. Then $a_i \leq M < 2M = 2a_i$, so by Lemma~\ref{LEMMA: Fibers of nu} $a_i = M$, because they are comparable. For any $i, j \in I$ we have $a_i = a_j = M = s$, so if $i \neq j$, then $s < 2s = a_i + a_j \leq s$ --- a contradiction.
			\end{enumerate}
		\end{proof}
		
		As mentioned, the scope of Theorem~\ref{THEOREM: Elementarity in idempotent semirings} is limited when it comes to supertropical semirings since $2$ is in general not cancellable. In fact, it is cancellable if and only if the supertropical semiring is idempotent, as we get $2 \cdot 1 = 2 = 4 = 2 \cdot 2$.
		
		So what can we say about elementarity in supertropical semirings? The reasoning from the previous section does not work directly, as the symmetrization $\Sym$ no longer has all symmetric polynomial functions as fixed points; for example 
		\[
		\Sym[2](x y) = x y + y x = 2 x y \neq x y.
		\] 
		Nor can we redefine the symmetrization as the average (rather than the sum) over permutations because  natural numbers from $2$ onward (which are actually all equal to $2$) are not cancellable, much less invertible.
		
		As a way of getting around that, we introduce the \df{minimal symmetrization} of a pure monomial in the following way. Pick such $n, j, i_1, \ldots, i_j, d_1, \ldots, d_j \in \NN_{\geq 1}$ that ${i_1 < i_2 < \ldots < i_j \leq n}$ and that $d_k$s are pairwise unequal. Then 
		\[
		\Minsym\big((x_1 \ldots x_{i_1})^{d_1}(x_{i_{1}+1} \ldots x_{i_2})^{d_2} \ldots (x_{i_{j-1}+1} \ldots x_{i_j})^{d_j}\big)
		\]
		 is defined as the sum of terms $(x_{\sigma(1)} \ldots x_{\sigma(i_1)})^{d_1} \ldots (x_{\sigma(i_{j-1}+1)} \ldots x_{\sigma(i_j)})^{d_j}$ over all those permutations $\sigma \in S_n$ which do not put all the variable to the same power as in some already added summand. By the standard formula for permutations of multisets this gives us $\frac{n!}{i_1! (i_2 - i_1)! (i_3 - i_2)! \ldots (i_j - i_{j-1})! (n - i_j)!}$ terms.
		
		Observe that the elementary symmetric polynomials are special cases of minimal symmetrizations, namely $\el{k}(x_1, \ldots, x_n) = \Minsym(x_1 \ldots x_k)$.
		
		\begin{lemma}\label{LEMMA: Factoring elementary symmetric polynomials (supertropical)}
			Let $X$ be a supertropical semiring. Given $n, j, i_1, \ldots, i_j, d_1, \ldots, d_j \in \NN_{\geq 1}$ with $i_1 < i_2 < \ldots < i_j \leq n$ and $d_1 > d_2 > \ldots > d_j$, we have (at the level of functions)
			\[\Minsym\big((x_1 \ldots x_{i_1})^{d_1} \ldots (x_{i_{j-1}+1} \ldots x_{i_j})^{d_j}\big) =\]
			\[= \el{i_j}^{d_j}(x_1, \ldots, x_n) \cdot \Minsym\big((x_1 \ldots x_{i_1})^{d_1 - d_j} \ldots (x_{i_{j-2}+1} \ldots x_{i_{j-1}})^{d_{j-1} - d_j}\big)\]
			for all $x_1, \ldots, x_j \in X$.
		\end{lemma}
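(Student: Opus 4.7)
The plan is to mirror the proof of Lemma~\ref{LEMMA: Factoring elementary symmetric polynomials}, using $\Minsym$ in place of $\Sym$ and exploiting that supertropical semirings are Frobenius; the new complication is that in $X$ the sum of equal summands need not be their join, which I resolve by passing through the ghost map $\nu$.

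First I would expand, using Frobenius, $\el{i_j}^{d_j}(x_1,\ldots,x_n) = \sum_{|S|=i_j}\prod_{i\in S}x_i^{d_j}$, and distribute across the inner $\Minsym$ to write the RHS as
\[ \sum_{(S,T_\bullet)} s(S,T_\bullet), \qquad s(S,T_\bullet) := \prod_{i\in S} x_i^{d_j} \cdot \prod_{l=1}^{j-1}\prod_{i\in T_l} x_i^{d_l-d_j}, \]
where $S\subseteq\{1,\ldots,n\}$ has size $i_j$ and $T_\bullet=(T_1,\ldots,T_{j-1})$ is an ordered block-partition with $|T_l|=i_l-i_{l-1}$ (setting $i_0:=0$) of some $i_{j-1}$-subset $T:=\bigcup_l T_l$. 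Splitting into Case~A ($T\subseteq S$) and Case~B ($T\not\subseteq S$), in Case~A the exponents rearrange to present $s(S,T_\bullet)$ as the LHS $\Minsym$ monomial for the partition $(T_1,\ldots,T_{j-1},S\setminus T)$, and each LHS monomial arises uniquely, so the Case~A partial sum equals LHS; hence $\text{LHS}\leq\text{RHS}$.

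For a Case~B term, the strict inequality $|T\setminus S| = i_{j-1}-|T\cap S| < i_j-|T\cap S| = |S\setminus T|$ yields an injection $\phi\colon T\setminus S \to S\setminus T$. For $i\in T_l\setminus S$, the binomial identity combined with Frobenius gives $x_i^{d_l-d_j} x_{\phi(i)}^{d_j} \leq (x_i+x_{\phi(i)})^{d_l} = x_i^{d_l}+x_{\phi(i)}^{d_l}$; multiplying these bounds into the remaining (unchanged) factors of $s$ and expanding produces $s(S,T_\bullet) \leq \sum_{\tau\in\{0,1\}^{T\setminus S}} m_\tau$, where each $m_\tau$ is itself an LHS $\Minsym$ monomial whose block-partition is read off from the pairing choices $\tau$.

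The main obstacle is converting these per-term bounds into the collective inequality $\text{RHS}\leq\text{LHS}$, since in a non-idempotent upper-bound semiring a sum of elements $\leq M$ need not stay $\leq M$. To close the argument I apply the semiring homomorphism $\nu\colon X\to\nu X$ of Proposition~\ref{PROPOSITION: Basic properties of nu}: $\nu X$ is idempotent Frobenius (in fact bipotent), and $\nu$ commutes with $\Minsym$, so Lemma~\ref{LEMMA: Factoring elementary symmetric polynomials} applied inside $\nu X$ gives $\nu(\text{LHS}) = \nu(\text{RHS})$. Combined with $\text{LHS}\leq\text{RHS}$, this places LHS and RHS in a common $\nu$-fiber; by Lemma~\ref{LEMMA: Fibers of nu}, either $\text{LHS}=\text{RHS}$ or RHS equals the ghost top $\nu(\text{LHS})$ while LHS is a tangible strictly below. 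In the latter case, Lemma~\ref{LEMMA: Tangible sum in a supertropical semiring} forces LHS to be the unique maximal $\Minsym$ monomial $M$ with $\nu(m)<\nu(M)$ for every other monomial $m$; then any hypothetical coincidence $s(S,T_\bullet)=M$ would force two distinct LHS monomials both equal to $M$ (contradicting uniqueness), so by Lemma~\ref{LEMMA: Strict inequality in supertropical semirings}(5) each Case~B term sits strictly below $M$ in the intrinsic order, and repeated use of Lemma~\ref{LEMMA: Strict inequality in supertropical semirings}(1) absorbs the Case~B contribution into $\text{LHS}=M$, yielding $\text{RHS}=\text{LHS}$.
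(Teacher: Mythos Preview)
Your overall architecture matches the paper's: obtain $\nu(\text{LHS})=\nu(\text{RHS})$ by pushing through to $\nu X$ and invoking Lemma~\ref{LEMMA: Factoring elementary symmetric polynomials}, observe $\text{LHS}\leq\text{RHS}$ from the Case~A terms, dispose of the ghost case by Lemma~\ref{LEMMA: Fibers of nu}, and in the tangible case reduce to showing the Case~B contribution is absorbed by the unique maximal monomial $M$. Your Frobenius bound $s(S,T_\bullet)\leq\sum_\tau m_\tau$ is also the same device the paper uses.

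The gap is in your last paragraph. You assert that a coincidence $s(S,T_\bullet)=M$ ``would force two distinct LHS monomials both equal to $M$'', but this does not follow from your bound. From $M=s\leq\sum_\tau m_\tau$ with each $m_\tau\leq M$ you only get that \emph{at least one} $m_\tau$ has value $M$; since the formal monomial $M$ is the unique LHS monomial with that value, exactly one $m_\tau$ equals $M$ and the rest are strictly smaller, so $\sum_\tau m_\tau=M$ --- perfectly consistent, no contradiction. (Concretely: with $n=3$, $i_1=1$, $i_2=2$, $d_1=2$, $d_2=1$, $S=\{1,2\}$, $T=\{3\}$, $\phi(3)=1$, your $m_\tau$'s are $x_2x_3^2$ and $x_1^2x_2$; if $x_1>x_2>x_3$ then $M=x_1^2x_2$ appears among the $m_\tau$, yet only once.) Consequently you have not established $s<M$ for Case~B terms, and your appeal to Lemma~\ref{LEMMA: Strict inequality in supertropical semirings}(5) is unjustified (you would also need $s$ tangible and not in the $\nu$-fibre of $M$, neither of which you have argued).

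What the paper supplies here --- and what your proposal is missing --- is a structural fact about $M$ itself: for the permutation $\sigma$ realizing $M$, one has $x_{\sigma(u)}>x_{\sigma(v)}$ whenever $u$ lies in an earlier block than $v$ (proved by swapping $x_{\sigma(u)},x_{\sigma(v)}$ and using that the swapped monomial is strictly below $M$). With this block-wise strict decrease in hand, the paper shows that across \emph{all} pairs $(\pi,\rho)$ and all expansion choices, the formal monomial $M$ arises exactly once in the total bound $\sum m_{\pi,\rho,\tau}$; every other term is a formal LHS monomial $\neq M$, hence has value $<M$, so the entire sum collapses to $M$ and $\text{RHS}\leq M$. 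You need this (or an equivalent direct comparison of each Case~B $s$ with $M$ via the block-ordering) to close the argument.
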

		
		\begin{proof}
			Clearly the statement holds if $d_j = 0$. Assume that $d_j > 0$.
			
			Let
			\begin{align*}
				&p(x_1, \ldots, x_n) \dfeq \Minsym\big((x_1 \ldots x_{i_1})^{d_1} \ldots (x_{i_{j-1}+1} \ldots x_{i_j})^{d_j}\big) \text{ \ and}\\
				&q(x_1, \ldots, x_n) \dfeq \Minsym\big((x_1 \ldots x_{i_1})^{d_1 - d_j} \ldots (x_{i_{j-2}+1} \ldots x_{i_{j-1}})^{d_{j-1} - d_j}\big).
			\end{align*}
			
			By Lemma~\ref{LEMMA: Factoring elementary symmetric polynomials} the statement is true for idempotent Frobenius semirings, in particular, it holds for $\nu{X}$. Thus for any $x_1, \ldots, x_n \in X$ 
			\[p\big(\nu(x_1), \ldots, \nu(x_n)\big) = \el{i_j}^{d_j}\big(\nu(x_1), \ldots, \nu(x_n)\big) \cdot q\big(\nu(x_1), \ldots, \nu(x_n)\big).\]
			Since $\nu$ is a semiring homomorphism,
			\[\nu\big(p(x_1, \ldots, x_n)\big) = \nu\big(\el{i_j}^{d_j}(x_1, \ldots, x_n) \cdot q(x_1, \ldots, x_n)\big).\]
			That is, the two sides, the equality of which we want to prove, are at the very least in the same fiber of $\nu$.
			
			As each summand of $p\big(x_1, \ldots, x_n\big)$ is also a summand of $\el{i_j}^{d_j}\big(x_1, \ldots, x_n\big) \cdot q\big(x_1, \ldots, x_n\big)$, we have $p\big(x_1, \ldots, x_n\big) \leq \el{i_j}^{d_j}\big(x_1, \ldots, x_n\big) \cdot q\big(x_1, \ldots, x_n\big)$.
			
			Suppose $p\big(x_1, \ldots, x_n\big) \in \nu{X}$; then it is the largest element in its $\nu$-fiber (Lemma~\ref{LEMMA: Fibers of nu}), so we have the equality we want.
			
			From here on suppose that $p\big(x_1, \ldots, x_n\big) \in X \setminus \nu{X}$. According to Lemma~\ref{LEMMA: Tangible sum in a supertropical semiring}, $p$ contains exactly one monomial $m$ which is strictly bigger that all the others at $(x_1, \ldots, x_n)$, and $p$ is equal to it. Since $m\big(x_1, \ldots, x_n\big) \in X \setminus \nu{X}$ and $\nu{X}$ is an ideal, all variables that appear in $m\big(x_1, \ldots, x_n\big)$ must be in $X \setminus \nu{X}$ as well.
			
			Let $\sigma \in S_n$ be such a permutation that $m = (x_{\sigma(1)} \ldots x_{\sigma(i_1)})^{d_1} \ldots (x_{\sigma(i_{j-1}+1)} \ldots x_{\sigma(i_j)})^{d_j}$. We claim that values of variables strictly decrease as we move from one block in $m$ to the next. More precisely, if $t \in \{1, \ldots, j-1\}$ and $u, v \in \NN$ are such that $1 \leq u \leq i_t < v \leq n$, then $x_{\sigma(u)} > x_{\sigma(v)}$. We prove this by eliminating all other options in part~\ref{LEMMA: Strict inequality in supertropical semirings:5} of Lemma~\ref{LEMMA: Strict inequality in supertropical semirings}.
			
			Let $m'$ be the monomial in $p$ which differs from $m$ only in having $x_{\sigma(u)}$ and $x_{\sigma(v)}$ switched. Since $m > m'$, $m > 2 m'$ and $m + m' = m$ in $(x_1, \ldots, x_n)$ by Lemma~\ref{LEMMA: Strict inequality in supertropical semirings}. Factor out the common part of $m$ and $m'$ to get
			\[m\big(x_1, \ldots, x_n\big) + m'\big(x_1, \ldots, x_n\big) = r\big(x_1, \ldots, x_n\big) \cdot \big(x_{\sigma(u)}^d + x_{\sigma(v)}^d\big)\]
			for a suitable monomial $r$ and $d \in \NN_{\geq 1}$.
			
			If $x_{\sigma(u)}$ and $x_{\sigma(v)}$ were in the same fiber of $\nu$, the same would hold for $x_{\sigma(u)}^d$ and $x_{\sigma(v)}^d$. Their sum would then be in $\nu{X}$, implying that ${m\big(x_1, \ldots, x_n\big) + m'\big(x_1, \ldots, x_n\big)}$ is in $\nu{X}$ as well, a contradiction.
			
			If $x_{\sigma(u)} \leq x_{\sigma(v)}$, then $m = m + m' \leq 2 m' < m$ in $(x_1, \ldots, x_n)$, likewise a contradiction. 
			
			We conclude that $x_{\sigma(u)} > x_{\sigma(v)}$.
			
			Take any $\pi, \rho \in S_n$, and consider the summand $s \dfeq (x_{\pi(1)} \ldots x_{\pi(j)})^{d_j} \cdot x_{\rho(1)}^{d_1 - d_j} \ldots x_{\rho(j-1)}^{d_{j-1} - d_j}$ from the right-hand side. We follow the proof of Lemma~\ref{LEMMA: Factoring elementary symmetric polynomials}. Let 
			\[
			I \dfeq \st{i \in \{1, \ldots, j\}}{\xsome{k}{\{1, \ldots, j\}}{\rho(i) = \pi(k)}}.
			\]
			 Since we can arbitrarily permute the variables in the product $x_{\pi(1)} \ldots x_{\pi(j)}$ without changing its value, we may assume without loss of generality that $\pi(i) = \rho(i)$ for all $i \in I$. Let $J \dfeq \{1, \ldots, j\} \setminus I$; then Frobenius property implies that
			\[x_{\pi(i)}^{d_j} \cdot x_{\rho(i)}^{d_i - d_j} \leq \sum_{k = 0}^{d_i} \binom{d_i}{k} x_{\pi(i)}^{k} x_{\rho(i)}^{d_i - k} = \big(x_{\pi(i)} + x_{\rho(i)}\big)^{d_i} = x_{\pi(i)}^{d_i} + x_{\rho(i)}^{d_i},\]
			for any $i \in J$. It follows that
			\[s \leq \prod_{i \in I} x_{\pi(i)}^{d_i} \cdot \prod_{i \in J} (x_{\pi(i)}^{d_i} + x_{\rho(i)}^{d_i}).\]
			If we use distributivity to fully expand this product, we see that each summand that we get also appears in $p\big(x_1, \ldots, x_n\big)$. If we get any monomial other than $m$, it does not change the sum (by Lemma~\ref{LEMMA: Strict inequality in supertropical semirings}), since it is strictly smaller than $m$. 
			
			We can also get $m$, but only in one way; once we prove this, the proof of the lemma is done. 
			The image of $\{1, \ldots, j\}$ under $\pi$ and $\rho$ has to be the same as under $\sigma$; in particular, $I = \{1, \ldots, j\}$, and $s = m$. Clearly, there is only one way to choose the appropriate term from $\el{i_j}^{d_j}\big(x_1, \ldots, x_n\big) = \Minsym(x_1 \ldots x_k)^{d_j} = \Minsym(x_1^{d_j} \ldots x_k^{d_j})$. As for the terms in $q(x_1, \ldots, x_n) = \Minsym\big((x_1 \ldots x_{i_1})^{d_1 - d_j} \ldots (x_{i_{j-2}+1} \ldots x_{i_{j-1}})^{d_{j-1} - d_j}\big)$, only one is given by $\rho$ which is the same as $\sigma$, up to permuting variables within blocks. The other terms are strictly smaller than that (recall from Lemma~\ref{LEMMA: Strict inequality in supertropical semirings} that multiplying with an element preserves $<$, as long as the result is tangible) since variables with smaller values (as shown above) appear in powers with larger exponents, and vice versa. Adding strictly smaller terms to $m$ does not change $m$.
		\end{proof}
		
		\begin{lemma}\label{LEMMA: Elementary factorization of an intrinsic symmetrization}
			Let $X$ be a supertropical semiring. Any minimal symmetrization of a pure monomial over $X$ is as a function equal to a product of elementary symmetric polynomials.
		\end{lemma}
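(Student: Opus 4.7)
The plan is to mimic the proof of Lemma~\ref{LEMMA: Elementary factorization of a monomial} from the idempotent case, substituting Lemma~\ref{LEMMA: Factoring elementary symmetric polynomials (supertropical)} for Lemma~\ref{LEMMA: Factoring elementary symmetric polynomials} as the inductive step. All the delicate tangibility bookkeeping has already been absorbed into that previous lemma, so the present argument should be a clean induction with no real obstacle.

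First I would note that $\Minsym$ of a pure monomial depends only on the multiset of exponents of its variables, because two pure monomials with the same multiset of exponents generate identical sets of distinct images under $S_n$. Consequently, by relabelling variables I may assume the monomial is already in the canonical form
\[
(x_1 \ldots x_{i_1})^{d_1} (x_{i_1+1} \ldots x_{i_2})^{d_2} \cdots (x_{i_{j-1}+1} \ldots x_{i_j})^{d_j},
\]
with $i_1 < i_2 < \ldots < i_j \leq n$ and $d_1 > d_2 > \ldots > d_j \geq 1$, where $j$ is the number of distinct nonzero exponents (variables with exponent $0$ simply do not appear in the monomial, although they still participate in the symmetrization).

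The induction is on $j$. For $j = 0$ the pure monomial is $1$ and $\Minsym(1) = 1$, which is the empty product of elementary symmetric polynomials. For the inductive step, apply Lemma~\ref{LEMMA: Factoring elementary symmetric polynomials (supertropical)} to extract the factor $\el{i_j}^{d_j}$, leaving
\[
\Minsym\bigl((x_1 \ldots x_{i_1})^{d_1 - d_j} \cdots (x_{i_{j-2}+1} \ldots x_{i_{j-1}})^{d_{j-1} - d_j}\bigr).
\]
Since $d_1 - d_j > \ldots > d_{j-1} - d_j > 0$, the right-hand factor is the minimal symmetrization of a pure monomial with exactly $j-1$ distinct nonzero exponents, and the inductive hypothesis expresses it as a product of elementary symmetric polynomials.

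Iterating $j$ times gives the explicit factorization
\[
\Minsym\bigl((x_1 \ldots x_{i_1})^{d_1} \cdots (x_{i_{j-1}+1} \ldots x_{i_j})^{d_j}\bigr) = \el{i_j}^{d_j} \cdot \el{i_{j-1}}^{d_{j-1} - d_j} \cdot \el{i_{j-2}}^{d_{j-2} - d_{j-1}} \cdots \el{i_1}^{d_1 - d_2},
\]
which is manifestly a product of elementary symmetric polynomials, completing the proof.
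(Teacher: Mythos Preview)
Your proof is correct and takes essentially the same approach as the paper, which simply says ``Follows by induction from Lemma~\ref{LEMMA: Factoring elementary symmetric polynomials (supertropical)}.'' You have merely spelled out the induction in detail and recorded the explicit closed-form factorization, which the paper leaves implicit.
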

		
		\begin{proof}
			Follows by induction from Lemma~\ref{LEMMA: Factoring elementary symmetric polynomials (supertropical)}.
		\end{proof}
		
		\begin{theorem}\label{THEOREM: Supertropical semirings are fully elementary}
			Any supertropical semiring is fully elementary.
		\end{theorem}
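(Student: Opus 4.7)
The plan is to mimic the proof of Theorem~\ref{THEOREM: Elementarity in idempotent semirings}, but using the minimal symmetrization $\Minsym$ in place of the full symmetrization $\Sym$, since in a supertropical semiring symmetric polynomials are in general not fixed by $\Sym$ (as the introductory discussion of this section already points out, $\Sym[2](xy) = 2xy \neq xy$). The heavy lifting has already been done in Lemma~\ref{LEMMA: Elementary factorization of an intrinsic symmetrization}; what remains is to decompose an arbitrary symmetric polynomial into an $X$-linear combination of minimal symmetrizations of pure monomials.

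Concretely, I would take a symmetric polynomial $p$ in $n$ variables, represented by a symmetric polynomial expression $\sum_k a_k m_k$ with pure monomials $m_k$, and partition the indices according to the $S_n$-action on monomials: $k$ and $\ell$ are equivalent when $m_\ell$ arises from $m_k$ by a permutation of variables. By the definition of symmetric polynomial expression (Definition~\ref{DEFINITION: Polynomials in semirings}), the data of coefficient-weighted monomials is invariant under this action, so after collecting any repeats of the same monomial via ordinary associativity and distributivity of the semiring, each orbit $\mathcal{O}$ contributes a piece of the form $b_\mathcal{O} \cdot \sum_{m \in \mathcal{O}} m$ for some common coefficient $b_\mathcal{O} \in X$.

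For each orbit $\mathcal{O}$, I would pick a representative and rewrite it, by permuting variables, in the standard block form $(x_1\ldots x_{i_1})^{d_1}(x_{i_1+1}\ldots x_{i_2})^{d_2}\ldots(x_{i_{j-1}+1}\ldots x_{i_j})^{d_j}$ with $i_1<\ldots<i_j\le n$ and $d_1>d_2>\ldots>d_j\ge 1$, obtained by sorting the positive exponents in decreasing order and grouping equal ones (any trailing variables with exponent $0$ are absorbed into the slack $i_j \leq n$). By construction the orbit sum $\sum_{m\in\mathcal{O}} m$ is exactly $\Minsym$ of this representative. Lemma~\ref{LEMMA: Elementary factorization of an intrinsic symmetrization} then rewrites each $\Minsym(m_\mathcal{O})$ as a product of elementary symmetric polynomials, so
\[
p \;=\; \sum_\mathcal{O} b_\mathcal{O} \cdot \Minsym(m_\mathcal{O})
\]
is displayed as a polynomial in $\el{1},\ldots,\el{n}$.

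The main obstacle is bundled into the already-proved Lemma~\ref{LEMMA: Factoring elementary symmetric polynomials (supertropical)}; the theorem itself reduces to orbit bookkeeping once that factoring identity is available, and involves no further tropical subtleties. The only genuine difficulty beyond the idempotent setting, namely that one cannot normalize by $n!$ or rely on $p = \Sym(p)$, is exactly what the ad hoc device of $\Minsym$ (and the tangible/ghost analysis through Lemmas~\ref{LEMMA: Fibers of nu}, \ref{LEMMA: Strict inequality in supertropical semirings}, and~\ref{LEMMA: Tangible sum in a supertropical semiring}) was designed to bypass.
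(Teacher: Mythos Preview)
Your proposal is correct and follows essentially the same route as the paper's own proof: decompose a symmetric polynomial expression into an $X$-linear combination of minimal symmetrizations of pure monomials, then invoke Lemma~\ref{LEMMA: Elementary factorization of an intrinsic symmetrization}. The paper compresses your orbit-bookkeeping paragraph into the single clause ``by definition we can write any symmetric polynomial as a linear combination of minimal symmetrizations of pure monomials''; your version simply spells out why Definition~\ref{DEFINITION: Polynomials in semirings} guarantees that each $S_n$-orbit carries a common coefficient.
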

		
		\begin{proof}
			By definition we can write any symmetric polynomial as a linear combination of minimal symmetrizations of pure monomials. The claim then follows from Lemma~\ref{LEMMA: Elementary factorization of an intrinsic symmetrization}.
		\end{proof}

	\section{Elementarity in Symmetrized Semirings}\label{SECTION: Elementarity in Symmetrized Semirings}
	
		There exists a construction for semirings which `symmetrizes' them in a particular way~\cite{Mplus, tropicalindependence}. In this section we consider elementarity of such symmetrized semirings.
		
		Given a semiring $X$, its \df{quasisymmetrization} is defined as $\qsymm(X) \dfeq X \times X$ with operations
		\[(a', a'') + (b', b'') \dfeq (a' + b', a'' + b''),\]
		\[(a', a'') \cdot (b', b'') \dfeq (a' \cdot b' + a'' \cdot b'', a' \cdot b'' + a'' \cdot b').\]
		These make $\qsymm(X)$ into a semiring which is commutative/unital/upper-bound if $X$ is. The additive unit is $(0, 0)$, the multiplicative unit is $(1, 0)$. Note that $X$ embeds into $\qsymm(X)$, in the sense that $x \mapsto (x, 0)$ is an injective (unital) semiring homomorphism.
		
		Quasisymmetrizations are not particularly interesting when it comes to elementarity. In fact, we have the following proposition.
		
		\begin{proposition}\label{PROPOSITION: Elementarity of quasisymmetrizations}
			Let $X$ be an upper-bound unital commutative semiring. The following statements are equivalent.
			\begin{enumerate}
				\item
					$\qsymm(X)$ is fully elementary.
				\item
					$\qsymm(X)$ is Frobenius.
				\item
					$X$ is trivial (\ie $X = \{0\}$).
			\end{enumerate}
		\end{proposition}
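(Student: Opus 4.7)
The plan is to prove the equivalence in the cycle $(3) \Rightarrow (1) \Rightarrow (2) \Rightarrow (3)$. Two of these steps will be essentially immediate from material already established: if $X = \{0\}$, then $\qsymm(X) = \{(0,0)\}$ is a one-element semiring, so every polynomial on it is constant and trivially expressible in terms of elementary symmetric polynomials, giving $(3) \Rightarrow (1)$. For $(1) \Rightarrow (2)$, full elementarity implies $2$-elementarity, and since $X$ upper-bound makes $\qsymm(X)$ upper-bound, Theorem~\ref{THEOREM: Frobenius and 2-elementarity}(\ref{THEOREM: Frobenius and 2-elementarity: 2}) turns $2$-elementarity into the Frobenius property.

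The substantive content is $(2) \Rightarrow (3)$. The idea is to choose a cleverly simple pair of elements $u,v \in \qsymm(X)$ so that the Frobenius equality $(u+v)^{2} = u^{2}+v^{2}$ pins down a drastic identity in $X$. The natural choice is $u = (1,0)$ and $v = (0,1)$. A direct computation from the definitions of addition and multiplication in $\qsymm(X)$ gives $(u+v)^{2} = (1,1)^{2} = (2,2)$, while $u^{2}+v^{2} = (1,0) + (1,0) = (2,0)$. Equating the second components forces $2 = 0$ in $X$, i.e.\ $1+1 = 0$.

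From $1+1 = 0$ the triviality of $X$ follows quickly using that $X$ is upper-bound: $0$ is the least element of the intrinsic order, so $0 \leq 1$; on the other hand, $1 + 1 = 0$ witnesses $1 \leq 0$; antisymmetry then gives $1 = 0$, and multiplying by an arbitrary $x \in X$ yields $x = 0$. Hence $X = \{0\}$.

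I don't anticipate a serious obstacle: the main task is simply identifying a sharp enough witness in $\qsymm(X)$ to make the Frobenius equality bite. The pair $(1,0)$, $(0,1)$ is the natural candidate because their product $(1,0)(0,1) = (0,1)$ is nontrivial in the second coordinate, which is exactly what creates the discrepancy with $u^{2}+v^{2}$ (whose second coordinate is forced to be $0$) and collapses the semiring.
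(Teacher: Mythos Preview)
Your proof is correct and follows essentially the same route as the paper: both argue $(3)\Rightarrow(1)\Rightarrow(2)\Rightarrow(3)$, invoke Theorem~\ref{THEOREM: Frobenius and 2-elementarity} for $(1)\Rightarrow(2)$ (using that $\qsymm(X)$ inherits the upper-bound property from $X$), and for $(2)\Rightarrow(3)$ apply the Frobenius identity to $u=(1,0)$, $v=(0,1)$ to obtain $(2,0)=(2,2)$, hence $2=0$ in $X$, and then use antisymmetry of the intrinsic order to collapse $0=1$. The only cosmetic difference is that the paper phrases the last step as $0\leq 1\leq 2=0$, while you phrase it as $1+1=0$ witnessing $1\leq 0$; these are the same observation.
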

		
		\begin{proof}
			\begin{itemize}
				\item\proven{$(1 \impl 2)$}
					By Theorem~\ref{THEOREM: Frobenius and 2-elementarity}.
				\item\proven{$(2 \impl 3)$}
					We have $(2, 0) = (1, 0)^2 + (0, 1)^2 = \big((1, 0) + (0, 1)\big)^2 = (1, 1)^2 = (2, 2)$. Hence $0 = 2$, and since $0 \leq 1 \leq 2$ and $X$ is upper-bound, we get $0 = 1$.
				\item\proven{$(3 \impl 1)$}
					Trivial.
			\end{itemize}
		\end{proof}
		
		However, a quasisymmetrization is generally just the first step towards constructing a new semiring. If all elements of $X$ are additively cancellable, the relation $\equ$, given by $(a', a'') \equ (b', b'') \dfeq a' + b'' = a'' + b'$, is a congruence on $\qsymm(X)$, \ie an equivalence relation which respects the semiring operations. Thus the quotient $\qsymm(X)/_\equ$ is a well-defined semiring. In fact, it is a ring, into which $X$ embeds via $x \mapsto [(x, 0)]$, and is the smallest one such in the suitable sense.
		
		This is a standard construction how to turn a semiring into a ring, but in case $X$ is not additively cancellable, a slight adjustment is required. The given $\equ$ is not transitive, and needs to be redefined to $(a', a'') \equ (b', b'') \dfeq \xsome{x}{X}{a' + b'' + x = a'' + b' + x}$. In that case the quotient $\qsymm(X)/_\equ$ is again a ring and we again have the canonical homomorphism $x \mapsto [(x, 0)]$, but this is no longer an embedding (it is not injective).
		
		To deal with this flaw, a different relation is considered in the context of tropical-like semirings (which are not additively cancellable)~\cite{gaubertmaxplus}. Recall that then $\equ$ is given by
		\[(a', a'') \equ (b', b'') \sepdfeq (a', a'') = (b', b'') \lor \big(a' \neq a'' \land b' \neq b'' \land a' + b'' = a'' + b'\big)\]
		for $(a', a''), (b', b'') \in \qsymm(X)$.
		
		This relation is not automatically a congruence, though; hence the following definition.
		
		\begin{definition}
			A semiring $X$ is \df{symmetrizable} when $\equ$ is a congruence, in which case $\symm(X) \dfeq \qsymm(X)/_\equ$ is a well-defined semiring. We call $\symm(X)$ the \df{symmetrization} of $X$.
		\end{definition}
				
		The classical example is $\mathbb{S}_{\max} \dfeq \symm(\RR_{\max})$, the symmetrization of the max-plus semiring~\cite{gaubertmaxplus}.
		
		It is easy to check that if $X$ is symmetrizable, then $x \mapsto [(x, 0)]$ is an injective (unital) semiring homomorphism from $X$ to $\symm(X)$.
		
		In this paper we limit ourselves to elementarity of upper-bound semirings, which also limits the consideration of symmetrizable semirings.
		
		\begin{lemma}\label{LEMMA: Upper-bound symmetrizable semirings}
			Let $X$ be a unital commutative semiring. The following statements are equivalent.
			\begin{enumerate}
				\item
					$X$ is symmetrizable and $\symm(X)$ is upper-bound.
				\item
					$X$ is idempotent, linearly ordered by its intrinsic order, and satisfies the following property: for all $a, b, x \in X$, if $a < b$, then $a \cdot x = 0$ or $a \cdot x < b \cdot x$.
			\end{enumerate}
		\end{lemma}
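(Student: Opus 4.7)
The plan is to prove each direction separately using the explicit description of $\equ$-classes in $\qsymm(X)$, bearing in mind that diagonal pairs $(a,a)$ form singleton classes.

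\textbf{Direction $(1) \Rightarrow (2)$.} I would extract each of the three properties of $X$ from the antisymmetry of the intrinsic order on $\symm(X)$. For \emph{idempotency}, note that $[(1,0)] \leq [(1,1)]$ via $[(0,1)]$, while $[(1,1)] \leq [(1,0)]$ is witnessed, whenever $2 \neq 1$ in $X$, by the off-diagonal relation $(2,1) \equ (1,0)$ (check: $2+0 = 1+1$); antisymmetry then equates $[(1,1)]$ with $[(1,0)]$, contradicting the singleton nature of the diagonal class of $(1,1)$. For \emph{linearity}, given $a, b \in X$ with $a \neq 0$ and $a \not\leq b$, the off-diagonal relation $(a+b, b) \equ (a, 0)$ (check: $(a+b)+0 = b+a$) yields $[(a,b)] \leq [(a,0)]$; combined with the reverse inequality via $[(0,b)]$, antisymmetry forces $[(a,0)] = [(a,b)]$, which unpacks to $b \leq a$. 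For the \emph{multiplicative property}, if $a < b$ then $[(b,a)] = [(b,0)]$ (the off-diagonal witness being $b + 0 = a + b$, with $b \neq 0$ because $0$ is the least element); multiplying by $[(x,0)]$ and using symmetrizability gives $[(bx, ax)] = [(bx, 0)]$, and the off-diagonal case, which is forced when $ax \neq 0$, yields $ax < bx$.

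\textbf{Direction $(2) \Rightarrow (1)$.} Under the hypotheses of (2), $+$ is the binary supremum on $X$, and an elementary analysis of $\max(a', b'') = \max(a'', b')$ with $a' > a''$ shows that $(a', a'') \equ (b', b'')$ forces $b' = a'$ and $b'' < a'$ (and symmetrically for $a' < a''$). From this structural description, transitivity of $\equ$ and compatibility with $+$ follow by short case analyses. Once symmetrizability is established, I would classify non-zero classes of $\symm(X)$ into three types $M^+$, $M^-$, $M^\bullet$ (positive, negative, balanced), compute the $+$-table among them, and read off the intrinsic order case by case (e.g.\ $M^+ \leq N^+ \iff M \leq N$, $M^+ \leq N^- \iff M < N$, $M^+ \leq a^\bullet \iff M \leq a$); antisymmetry then reduces to linearity of $X$.

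\textbf{Main obstacle.} The delicate step in the backward direction is verifying that $\cdot$ respects $\equ$. Given off-diagonal $(a', a'') \equ (b', b'')$ with $a' = b' > a'', b''$, expanding both products against a fixed $(c', c'')$ produces components of the form $a' c^{(\cdot)} + y c^{(\cdot)}$ with $y \in \{a'', b''\}$. The multiplicative property of (2) is precisely what guarantees each small term $y c^{(\cdot)}$ is either $0$ or strictly below the corresponding $a' c^{(\cdot)}$, so that the two resulting pairs share first coordinates and have second coordinates both strictly below that first coordinate (or else are equal). This places them in the same off-diagonal class. Without the multiplicative property, a small term could equal a large one and collapse a product pair onto the diagonal inconsistently, breaking symmetrizability; the forward implication's derivation of $ax < bx$ is in essence the reverse-engineering of this same phenomenon.
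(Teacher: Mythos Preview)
Your forward direction $(1)\Rightarrow(2)$ is essentially the paper's argument: you deduce idempotency from $[(1,0)]\leq[(1,1)]\leq[(1,0)]$ via $(2,1)\equ(1,0)$, and the multiplicative property by multiplying an off-diagonal relation by $[(x,0)]$. Your linearity step differs slightly---the paper uses transitivity of $\equ$ to chain $(a,b)\equ(a+b,b)\equ(a+b,a)\equ(b,a)$, whereas you use antisymmetry in $\symm(X)$ to force $[(a,b)]=[(a,0)]$---but both are short and valid.

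For $(2)\Rightarrow(1)$ your route is a genuine improvement on the paper's. The paper verifies transitivity and compatibility of $\equ$ with $+$ and $\cdot$ by ad~hoc case analyses (three subcases for multiplication), and then \emph{stops}, never actually checking that $\symm(X)$ is upper-bound---so the published proof of this direction is incomplete. Your structural observation that off-diagonal $\equ$-classes are precisely the sets $\{(M,m):m<M\}$ and $\{(m,M):m<M\}$ for fixed $M$ collapses most of that casework, makes the multiplication check transparent (once one fixes, say, $c'>c''$ so that the ``large'' coordinate is the first), and gives you exactly the trichotomy $M^{+},M^{-},M^{\bullet}$ needed to compute the intrinsic order on $\symm(X)$ and verify antisymmetry. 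One small imprecision: in your multiplication paragraph the ``small term $y\,c^{(\cdot)}$'' and the ``corresponding $a'c^{(\cdot)}$'' carry \emph{different} $c$-factors, so the comparison $y\,c''<a'c'$ needs the multiplicative hypothesis applied twice (once in the $a$-variable, once in the $c$-variable), together with a separate trivial check when $c'=c''$. With that clarification the argument goes through.
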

		
		\begin{proof}
			\begin{itemize}
				\item\proven{$(1 \impl 2)$}
					We have $[(1, 0)] + [(0, 1)] = [(1, 1)]$, so $[(1, 0)] \leq [(1, 1)]$. Suppose $1 \neq 2$ in $X$; then $[(1, 1)] + [(1, 0)] = [(2, 1)] = [(1, 0)]$, so $[(1, 1)] \leq [(1, 0)]$. But ${[(1, 0)] \neq [(1, 1)]}$, which contradicts the assumption that $\symm(X)$ is upper-bound. Thus $1 = 2$, \ie $X$ is idempotent.
					
					Now let $a, b \in X$ and suppose that neither $a \leq b$ nor $b \leq a$, meaning that ${a \neq a + b \neq b}$, and of course also $a \neq b$. Then $(a, b) \equ (a + b, b) \equ (a + b, a) \equ (b, a)$, so by transitivity $(a, b) \equ (b, a)$. This means that $(a, b) = (b, a)$ or $a = a + a = b + b = b$, a contradiction either way. It follows that $X$ is linearly ordered.
					
					Finally, let $a, b, x \in X$ with $a < b$. Then ${(0, b) \equ (a, b)}$, hence 
					\[
					(0, b) \cdot (x, 0) \equ (a, b) \cdot (x, 0),
					\] \ie $(0, b x) \equ (a x, b x)$. If $a x = 0$, we are done. Otherwise, $a x \neq b x$. Since $a \leq b$, $a x \leq b x$ and therefore $a x < b x$.
				\item\proven{$(2 \impl 1)$}
					It is clear from the definition that $\equ$ is reflexive and symmetric. To see that it is transitive, take any 
					\[
					(a', a''), (b', b''), (c', c'') \in \qsymm(X)
					\]
					 with $(a', a'') \equ (b', b'') \equ (c', c'')$. If any two of these pairs are equal, we are done. Otherwise we have $a' \neq a''$, $b' \neq b''$, $c' \neq c''$, $a' + b'' = a'' + b'$ and $b' + c'' = b'' + c'$. Assume $b' < b''$ (the case $b'' < b'$ is analogous). Since $b'' \leq b'' + a' = a'' + b'$, it follows $a'' > b'$, and then likewise $b'' > a'$, so $a'' = b''$. In the same way we get $b'' = c'' > b' < b'' > c'$. Hence $a' + c'' = a' + c'' + b' = a' + b'' + c' = a'' + b' + c' = a'' + c'$, which concludes the proof of transitivity.
					
					We have seen that $\equ$ is an equivalence relation. To conclude that it is a congruence, we still need to see that it respects addition and multiplication.
					
					Let $(a', a''), (b', b''), (c', c'') \in \qsymm(X)$ with $(a', a'') \equ (b', b'')$. If ${(a', a'') = (b', b'')}$, then $(a', a'') + (c', c'') \equ (b', b'') + (c', c'')$ and $(a', a'') \cdot (c', c'') \equ (b', b'') \cdot (c', c'')$. Suppose that $a' \neq a''$, $b' \neq b''$ and $a' + b'' = a'' + b'$. If $a' < a''$, then ${a' < a'' + b' = b''}$, so $b' \leq b''$, but $b' \neq b''$, so $b' < b''$. The case $a'' < a'$ is similar. Thus our assumption reduces to
					\[a' < a'' \land b' < b'' \land a'' = b'' \qquad \text{or} \qquad a'' < a' \land b'' < b' \land a' = b'.\]
					The two cases are analogous, so without loss of generality we restrict ourselves to the first one.
					
					Suppose that $(a', a'') + (c', c'') = (b', b'') + (c', c'')$; then 
					\[
					(a', a'') + (c', c'') \equ (b', b'') + (c', c'').
					\]
					 It cannot happen that $a'' + c'' \neq b'' + c''$ since $a'' = b''$. The only remaining case is that $a' + c' \neq b' + c'$. The pairs $(a', a'')$ and $(b', b'')$ appear symmetrically throughout, so assume without loss of generality that $a' + c' < b' + c'$. From here it follows that $a' < b'$ and $c' < b'$. To summarize: $a', c' < b' < b'' = a''$.
					
					Hence $a' + c' < b' + c' = b' < b'' \leq b'' + c'' = a'' + c''$, meaning that $a' + c' \neq a'' + c''$ and $b' + c' \neq b'' + c''$. Additionally, $a' + c' + b'' + c'' = a'' + c'' + b' + c'$ since $a' + b'' = a'' + b'$. We conclude $(a', a'') + (c', c'') \equ (b', b'') + (c', c'')$.
					
					As for products, we separate three cases.
					\begin{itemize}
						\item
							If $(a', a'') \cdot (c', c'') = (b', b'') \cdot (c', c'')$, then ${(a', a'') \cdot (c', c'') \equ (b', b'') \cdot (c', c'')}$.
						\item
							Suppose $a' c' + a'' c'' \neq b' c' + b'' c''$. Since $(a', a'')$ and $(b', b'')$ appear symmetrically, we assume without loss of generality that $a' c' + a'' c'' < b' c' + b'' c''$; since $a'' = b''$, it follows that $a' c' < b' c'$ and $a' < b'$. Also, 
							\[
							b'' c'' = a'' c'' \leq a' c' + a'' c'',
							\]
							 so $b' c' > b'' c''$, and therefore $c' > c''$ (since $b' < b''$). 
							
							Since $b' < b''$ and $b' c' \neq 0$ (because $b' c' > b'' c'' \geq 0$), we have $b' c' < b'' c'$. Thus $b' c'' + b'' c' = b'' c' > b' c' = b' c' + b'' c'' > a' c' + a'' c''$ and $a' c'' + a'' c' = a'' c' = b'' c'$, so $a' c' + a'' c'' \neq a' c'' + a'' c'$ and $b' c' + b'' c'' \neq b' c'' + b'' c'$.
							
							Also, 
							\[
							a' c' + a'' c'' + b' c'' + b'' c' = (a' + b'') c' + (a'' + b') c'' =
							\]
							\[
							= (a'' + b') c' + (a' + b'') c'' = a'' c' + a' c'' + b' c' + b'' c''.\] 
							In conclusion, $(a', a'') \cdot (c', c'') \equ (b', b'') \cdot (c', c'')$.
						\item
							The remaining case $a' c'' + a'' c' \neq b' c'' + b'' c'$ works the same as the previous one, just with $c'$ and $c''$ switched.
					\end{itemize}
					
					We have seen that for any $a, b, c \in \qsymm(X)$, if $a \equ b$, then $a + c \equ b + c$ and $a \cdot c \equ b \cdot c$. But then for any $a, b, c, d \in \qsymm(X)$ with $a \equ b$ and $c \equ d$ we have $a + c \equ b + c \equ b + d$ and $a \cdot c \equ b \cdot c \equ b \cdot d$. We already know that $\equ$ is transitive, so we conclude that $\equ$ is a congruence.
					
					$X$ is idempotent, therefore $\symm(X)$ is idempotent and is upper-bound by Proposition~\ref{PROPOSITION: Order of idempotent semirings}.
			\end{itemize}
		\end{proof}
		
		This lemma tells us, the symmetrizations of which semirings we should consider, but the result is the same as in the case of Proposition~\ref{PROPOSITION: Elementarity of quasisymmetrizations} (with essentially the same proof).
		
		\begin{proposition}\label{PROPOSITION: Elementarity of symmetrizations}
			Let the semiring $X$ satisfy the equivalent properties from Lemma~\ref{LEMMA: Upper-bound symmetrizable semirings}. The following statements are equivalent.
			\begin{enumerate}
				\item
					$\symm(X)$ is fully elementary.
				\item
					$\symm(X)$ is Frobenius.
				\item
					$X$ is trivial.
			\end{enumerate}
		\end{proposition}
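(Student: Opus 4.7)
The plan is to mimic the proof of Proposition~\ref{PROPOSITION: Elementarity of quasisymmetrizations}, adapting it to the quotient $\symm(X) = \qsymm(X)/_\equ$. The implication $(1 \Rightarrow 2)$ is immediate from Theorem~\ref{THEOREM: Frobenius and 2-elementarity}: Lemma~\ref{LEMMA: Upper-bound symmetrizable semirings} guarantees that $\symm(X)$ is upper-bound, so full elementarity (which entails $2$-elementarity) forces the Frobenius property. The implication $(3 \Rightarrow 1)$ is trivial, since $\symm(X)$ is then itself the trivial semiring.

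The heart of the argument is $(2 \Rightarrow 3)$. I would apply the Frobenius equality $(x + y)^2 = x^2 + y^2$ to the elements $x = [(1, 0)]$ and $y = [(0, 1)]$ of $\symm(X)$. A short computation from the definition of multiplication in $\qsymm(X)$ shows that both $[(1, 0)]^2$ and $[(0, 1)]^2$ equal $[(1, 0)]$, while $\big([(1, 0)] + [(0, 1)]\big)^2 = [(1, 1)]^2 = [(2, 2)]$. The Frobenius identity then yields $[(2, 0)] = [(2, 2)]$. By Lemma~\ref{LEMMA: Upper-bound symmetrizable semirings} the underlying $X$ is idempotent, so $2 = 1$ in $X$, and the equality reduces to $[(1, 0)] = [(1, 1)]$.

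At this point I would unfold the definition of $\equ$. Either $(1, 0) = (1, 1)$ on the nose, which forces $0 = 1$ in $X$; or the second disjunct of $\equ$ applies, but that requires $1 \neq 1$ in the first coordinate, which is absurd. So $0 = 1$ in $X$, and multiplying by an arbitrary $x \in X$ gives $x = 1 \cdot x = 0 \cdot x = 0$; hence $X$ is trivial.

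I do not anticipate a real obstacle: the only mild subtlety relative to the quasisymmetrization case is that the computation takes place modulo $\equ$, so one must verify that the second, non-trivial disjunct of $\equ$ cannot save the day when comparing $(1, 0)$ with $(1, 1)$. Since the coincidence of first coordinates rules that disjunct out immediately, the rest of the argument is a purely mechanical transcription of the proof of Proposition~\ref{PROPOSITION: Elementarity of quasisymmetrizations}.
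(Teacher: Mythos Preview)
Your proposal is correct and follows essentially the same route as the paper. The only cosmetic difference is that the paper silently uses idempotency of $X$ (from Lemma~\ref{LEMMA: Upper-bound symmetrizable semirings}) to write the computation directly as $[(1,0)] = [(1,0)]^2 + [(0,1)]^2 = [(1,1)]^2 = [(1,1)]$, whereas you first obtain $[(2,0)] = [(2,2)]$ and then invoke idempotency explicitly; and where you unfold the two disjuncts of $\equ$, the paper simply remarks that $(1,1)$ is equivalent only to itself---which is exactly your observation that the second disjunct fails because it would require $b' \neq b''$ for the pair $(1,1)$.
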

		
		\begin{proof}
			\begin{itemize}
				\item\proven{$(1 \impl 2)$}
					By Theorem~\ref{THEOREM: Frobenius and 2-elementarity}.
				\item\proven{$(2 \impl 3)$}
					We have $[(2, 0)] = [(1, 0)]^2 + [(0, 1)]^2 = \big([(1, 0)] + [(0, 1)]\big)^2 = [(1, 1)]^2 = [(2, 2)]$, \ie $(2, 0) \equ (2, 2)$. Since $(2, 2)$ is equivalent only to itself, we conclude $0 = 2$, 
and since $0 \leq 1 \leq 2$ and $X$ is upper-bound, we get $0=1$.
				\item\proven{$(3 \impl 1)$}
					Trivial.
			\end{itemize}
		\end{proof}
	
	\section{Discussion}\label{SECTION: Conclusion and Questions}
	
		We studied elementarity --- the ability to express symmetric polynomials with elementary ones --- in upper-bound semirings. We have seen that $2$-elementarity is equivalent to the Frobenius property (Theorem~\ref{THEOREM: Frobenius and 2-elementarity}). We have seen that in idempotent semirings the Frobenius property is equivalent to full elementarity (Theorem~\ref{THEOREM: Elementarity in idempotent semirings} and Corollary~\ref{COROLLARY: Elementarity in idempotent semirings}).
		
		Furthermore, we proved that supertropical semirings (known to be Frobenius) are all fully elementary (Theorem~\ref{THEOREM: Supertropical semirings are fully elementary}).
		
		We gave a characterization for when the symmetrization of a semiring exists and yields an upper-bound semiring (Lemma~\ref{LEMMA: Upper-bound symmetrizable semirings}). We then showed that under these conditions no non-trivial symmetrization is Frobenius or fully elementary (Proposition~\ref{PROPOSITION: Elementarity of symmetrizations}).
		
		One of the goals of this paper was to answer the questions posed in~\cite{symtrop}. The following theorem proves that the symmetrized min-plus and the symmetrized max-plus semirings are not fully elementary.
		\begin{theorem}\label{THEOREM: Tropical-like semirings are fully elementary}
			The tropical semiring $\RR_{\min}$ and the max-plus semiring $\RR_{\max}$ are fully elementary. Their symmetrizations $\symm(\RR_{\min})$ and $\symm(\RR_{\max})$ are not.
		\end{theorem}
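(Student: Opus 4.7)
My plan has two essentially independent parts, one for each of the two assertions in the theorem.

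For full elementarity of $\RR_{\min}$ and $\RR_{\max}$, the plan is simply to invoke Corollary~\ref{COROLLARY: Elementarity in idempotent semirings}. Both $\RR_{\min} = \RR \cup \{+\infty\}$ and $\RR_{\max} = \RR \cup \{-\infty\}$ are unital commutative semirings whose addition ($\min$ respectively $\max$) is idempotent. Their intrinsic orders are linear: on $\RR_{\max}$ the intrinsic order matches the usual order of the extended reals (with $-\infty$ as least element), while on $\RR_{\min}$ it is the reverse (with $+\infty$ as least element). Proposition~\ref{PROPOSITION: Linear idempotent semiring is Frobenius} then yields the Frobenius property, and Corollary~\ref{COROLLARY: Elementarity in idempotent semirings} delivers full elementarity.

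For the second assertion, the plan is to apply Proposition~\ref{PROPOSITION: Elementarity of symmetrizations}: once we know that $\RR_{\min}$ and $\RR_{\max}$ satisfy the equivalent conditions of Lemma~\ref{LEMMA: Upper-bound symmetrizable semirings} and are non-trivial, the proposition immediately gives that the symmetrizations are not Frobenius, and in particular not fully elementary. Idempotency and linear ordering are already in hand, so only the cancellation-style condition needs checking: whenever $a < b$ in the intrinsic order, either $a \cdot x = 0$ or $a \cdot x < b \cdot x$. I would handle this by a short case analysis. If $a$ equals the additive unit ($+\infty$ or $-\infty$ respectively), or if $x$ does, then tropical multiplication (ordinary addition of reals, with the convention $\pm\infty + t = \pm\infty$) gives $a \cdot x = 0$. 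Otherwise $a$, $b$, $x$ are all finite reals (note that $b$ cannot be $0$ since $0$ is the least intrinsic element and $a < b$), and strict monotonicity of ordinary real addition yields $a + x < b + x$ in the correct direction of the intrinsic order.

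I do not anticipate any real conceptual obstacle; the theorem is essentially a routine application of the machinery developed in Sections \ref{Elementarity in Idempotent Semirings} and \ref{SECTION: Elementarity in Symmetrized Semirings}. The only point requiring a little bookkeeping care is to keep the orientation straight in the $\RR_{\min}$ case, where the intrinsic order is the reverse of the usual order on the reals and the semiring zero is $+\infty$, so that the final strict inequality $a \cdot x < b \cdot x$ is verified in the intrinsic rather than the usual direction.
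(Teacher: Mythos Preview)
Your proposal is correct and follows essentially the same approach as the paper: invoke Proposition~\ref{PROPOSITION: Linear idempotent semiring is Frobenius} and Corollary~\ref{COROLLARY: Elementarity in idempotent semirings} for full elementarity of $\RR_{\min}$ and $\RR_{\max}$, and Proposition~\ref{PROPOSITION: Elementarity of symmetrizations} for the negative result on the symmetrizations. The paper's proof is terser in that it simply cites Proposition~\ref{PROPOSITION: Elementarity of symmetrizations} without explicitly verifying the hypotheses of Lemma~\ref{LEMMA: Upper-bound symmetrizable semirings}, whereas you spell out the cancellation-style condition via a case analysis; your extra care here is entirely appropriate and the argument you sketch is correct.
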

		
		\begin{proof}
			$\RR_{\min}$ and $\RR_{\max}$ are unital, commutative and idempotent. The intrinsic order on $\RR_{\max}$ is the usual one $\leq$ on $\RR$, and on $\RR_{\min}$ it is its opposite $\geq$; in both cases we get a linear order. Now apply Proposition~\ref{PROPOSITION: Linear idempotent semiring is Frobenius} and Corollary~\ref{COROLLARY: Elementarity in idempotent semirings} to get full elementarity.\footnote{Actually, $\RR_{\min}$ and $\RR_{\max}$ are also supertropical semirings, so we could have applied Theorem~\ref{THEOREM: Supertropical semirings are fully elementary} as well.}
			
			Their symmetrizations are not fully elementary by Proposition~\ref{PROPOSITION: Elementarity of symmetrizations}.
		\end{proof}
		
			Supetropical semirings, including the extended tropical semiring, are fully elementary.
					
		\begin{theorem}\label{THEOREM: Extended tropical semiring is fully elementary}
			The extended tropical semiring is fully elementary.
		\end{theorem}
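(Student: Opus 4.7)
The plan is to simply invoke Theorem~\ref{THEOREM: Supertropical semirings are fully elementary}. The extended tropical semiring, as introduced in~\cite{extendedsemiring} and discussed in~\cite{Izhakian20102222, Izhakian2011}, is an archetypal example of a supertropical semiring: its underlying set consists of a copy of the tangible reals together with a ghost copy (and a zero/absorbing element), and the ghost image of a tangible element $a$ is exactly $\nu(a) = 2a = a+a$, with ghost elements behaving bipotently among themselves. Thus the first step of the proof is to state that the extended tropical semiring satisfies the three bullets of the definition of a supertropical semiring (that $\nu$ projects onto the ghost ideal, that summing elements in distinct $\nu$-fibers returns the larger one, and that summing elements in the same $\nu$-fiber returns the ghost).

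Once that identification is made, the theorem is immediate: by Theorem~\ref{THEOREM: Supertropical semirings are fully elementary}, any supertropical semiring is fully elementary, so the extended tropical semiring is in particular fully elementary.

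I expect no real obstacle here, since the verification that the extended tropical semiring is supertropical is already folklore in the cited literature; the proof in the paper is therefore expected to be one or two sentences, essentially a citation plus application of the previously established theorem. If anything were to require attention, it would be checking that the particular presentation of the extended tropical semiring used by the authors matches the definition of supertropical semiring given earlier in this paper (in particular that $\nu$ is a semiring homomorphism, equivalently $2 = 4$), but this is exactly the content of~\cite{Izhakian20102222} and does not warrant a fresh argument.
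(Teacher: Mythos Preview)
Your proposal is correct and matches the paper's proof exactly: the paper simply observes that the extended tropical semiring is a supertropical semiring and invokes Theorem~\ref{THEOREM: Supertropical semirings are fully elementary}. Your anticipation that the proof would be a one-line citation-plus-application is precisely what happens.
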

		
		\begin{proof}
			Since the extended tropical semiring is a supertropical semiring, Theorem~\ref{THEOREM: Supertropical semirings are fully elementary} applies.
		\end{proof}
			
	The most general family of semirings, for which we managed to prove full elementarity, are supertropical semirings (Theorem~\ref{THEOREM: Supertropical semirings are fully elementary}). In other words, we have an analogue of the Fundamental Theorem of Symmetric Polynomials for supertropical semirings. Or rather, we have the existence part of this theorem. The uniqueness clearly does not hold; for example, in any Frobenius idempotent semiring the symmetric polynomial function $x^2 + y^2$ can be represented as $(x + y)^2$, as well as $(x + y)^2 + x y$. We do not know yet, whether a different notion of uniqueness could be defined that would allow a complete translation of the Fundamental Theorem of Symmetric Polynomials.
		
		Our findings raise a host of further question. While we have a characterization of $2$-elementarity for upper-bound semirings, our theorems about full elementarity were not as general.
		
		\begin{question}
			Does the Frobenius property characterize full elementarity in general upper-bound semirings? If not, what additional conditions are required?
		\end{question}
		
		Speaking of the Frobenius property, recall that it automatically holds in any linearly ordered upper-bound unital commutative idempotent semiring (Proposition~\ref{PROPOSITION: Linear idempotent semiring is Frobenius}), as well as in supertropical semirings which are very close to being linearly ordered. This leads to a question, in how general of semirings can we use linearity to prove Frobenius? In particular, note that (recall Lemma~\ref{LEMMA: 2 in Frobenius semirings}) we have the implications
		\[1 = 2 \implies \text{Frobenius} \implies 2 = 3\]
		in any linearly ordered upper-bound unital commutative semiring. The first implication does not reverse (for example, the extended tropical semiring is supertropical and thus Frobenius, but is not idempotent); what about the second one?
		
		\begin{question}
			Consider an upper-bound unital commutative semiring, linearly ordered by its intrinsic order. Is $2 = 3$ not just a necessary, but also a sufficient condition for such a semiring to be Frobenius?
		\end{question}
		
		Next, consider the results about semiring symmetrizations (Section~\ref{SECTION: Elementarity in Symmetrized Semirings}). They were very much negative, but maybe that is because the scope was too narrow --- we know from Lemma~\ref{LEMMA: Upper-bound symmetrizable semirings} that if the symmetrization $\symm(X)$ exists and is upper-bound, the semiring $X$ is necessarily linearly ordered (among other things). An adaptation of the symmetrizing relation was considered in~\cite[Definition~2.2.9]{gaubert92a}.
		
		\begin{question}
			Does the symmetrizing relation from~\cite[Definition~2.2.9]{gaubert92a}, or any other reasonable adaptation of the symmetrizing relation, lead to better behaved symmetrized semirings in terms of elementarity?
		\end{question}
		
		We return to the notion of symmetric polynomial functions. As stated in Remark~\ref{REMARK: variants of symmetry}, there are two reasonable notions for a polynomial function to be symmetric. For our results we needed the `syntactic' one (except in the case of idempotent semirings, where we know that the two notions coincide), but do our results hold also if we assume the `semantic' definition?
		
		\begin{question}\label{QUESTION: match between syntactic and semantic symmetry}
			Is it the case for every upper-bound semiring that a polynomial function, invariant under permutations of variables, is necessarily represented by a symmetric polynomial? If not, is it true at least for supertropical semirings?
		\end{question}
		
	Finally, we did not discuss the computational complexity of our method for reduction to elementary symmetric functions. It would be interesting to do it and compare it with the known complexity results over fields.

	\printbibliography

\end{document}